\newtheorem{theorem}{Theorem}[section]
\newtheorem{lemma}[theorem]{Lemma}
\newtheorem{corollary}[theorem]{Corollary}
\theoremstyle{definition}
\newtheorem{definition}[theorem]{Definition}
\newtheorem{example}[theorem]{Example}
\newtheorem{proposition}[theorem]{Proposition}
\newtheorem{notation}[theorem]{Notation}
\theoremstyle{remark}
\newtheorem{remark}[theorem]{Remark}
\numberwithin{equation}{section}
\newcommand{\Qadic}[2]{{\mathbb{Z}\left[\frac{#1}{#2}\right]}}
\newcommand{\N}{{\mathbb{N}}}
\newcommand{\bigslant}[2]{{\raisebox{.2em}{$#1$}\left/\raisebox{-.2em}{$#2$}\right.}}
\begin{document}

\title[Construction of bimodules between noncommutative solenoids]
{Explicit construction of equivalence bimodules between noncommutative solenoids}

\author{Fr\'{e}d\'{e}ric Latr\'{e}moli\`{e}re}
\address{Department of Mathematics, University of Denver, 80208}
\email{frederic@math.du.edu}
\author{Judith A. Packer}
\address{Department of Mathematics, Campus Box 395, University of Colorado, Boulder, CO, 80309-0395}
\email{packer@euclid.colorado.edu}
\subjclass{Primary 46L40, 46L80; Secondary 46L08, 19K14}
\date{May 21, 2014}

\keywords{C*-algebras; solenoids; projective modules; $p$-adic analysis}

\begin{abstract}
Let $p\in \mathbb{N}$ be prime, and let $\theta$ be irrational.  The authors have previously shown that the noncommutative $p$-solenoid corresponding to the multiplier of the group $\left(\Qadic{1}{p}\right)^2$ parametrized by $\alpha=(\theta+1, (\theta+1)/p,\cdots, (\theta+1)/p^j,\cdots )$ is strongly Morita equivalent to the noncommutative solenoid on $\left(\Qadic{1}{p}\right)^2$ coming from the multiplier $\beta= (1-\frac{\theta+1}{\theta},1-\frac{\theta+1}{p\theta}, \cdots, 1-\frac{\theta+1}{p^j\theta}, \cdots )$ . The method used a construction of Rieffel referred to as the ``Heisenberg bimodule" in which the two noncommutative solenoid corresponds to two different twisted group algebras associated to dual lattices in $(\mathbb Q_p\times \mathbb R)^2.$  In this paper, we make three additional observations:  first, that at each stage, the subalgebra given by the  irrational rotation algebra corresponding to $\alpha_{2j}=(\theta+1)/p^{2j}$ is strongly Morita equivalent to the irrational rotation algebra corresponding to the irrational rotation algebra corresponding to $\beta_{2j}= 1-\frac{\theta+1}{p^{2j}\theta}$  by a different construction of Rieffel, secondly, that 
that Rieffel's Heisenberg module relating the two non commutative solenoids can be constructed as the closure of a nested sequence of function spaces associated to a multiresolution analysis for a $p$-adic wavelet, and finally, at each stage, the equivalence bimodule between $A_{\alpha_{2j}}$ and $A_{\beta_{2j}}$ can be identified with the subequivalence bimodules arising from the $p$-adic MRA.  Aside from its instrinsic interest, we believe this construction will guide us in our efforts to show that certain necessary conditions for two noncommutative solenoids to be strongly Morita equivalent are also sufficient.  

\end{abstract}

\maketitle


\section{Introduction}

In this paper, we continue our study  \cite{LP1,LP2} of the twisted group $C^{\ast}$-algebras for the groups $\left(\Qadic{1}{p}\right)^2$, where for any prime number $p$, the group $\Qadic{1}{p}$ is the additive subgroup of $\mathbb{Q}$ consisting of fractions with denominators given as powers of $p$:
\begin{equation*}
\Qadic{1}{p} = \left\{ \frac{q}{p^k} : q\in \mathbb{Z}, k \in \N \right\}\text{.}
\end{equation*}
The dual of $\Qadic{1}{p}$ is the $p$-solenoid $\Xi_p$, thereby motivating our terminology in calling these $C^{\ast}$-algebras {\it noncommutative solenoids}. The $p$-solenoid group $\Xi_p$ is given, up to isomorphism, as:
\begin{equation*}
\Xi_p = \left\{ \left(\alpha_j\right)_{j\in\N} \in \prod_{j\in\N} [0,1) : \forall j \in \N\,\exists m \in \{0,\ldots,p-1\}\quad \alpha_{j+1} = p \alpha_j + m \right\}\text{,}
\end{equation*}
with the group operation given as term-wise addition modulo $1$.

Let us fix a prime number $p$, and let us denote $\left(\Qadic{1}{p}\right)^2$ by $\Gamma$. Let $\sigma$ be a multiplier on $\Gamma$.   From our previous work \cite{LP1}, we know that there exists $\alpha \in \Xi_p$
such that $\sigma$ is cohomologous to $\Psi_{\alpha}:\Gamma\times \Gamma\to \mathbb T$ defined by:
\begin{equation*}
\Psi_{\alpha}\left(\left(\frac{j_1}{p^{k_1}},\frac{j_2}{p^{k_2}}\right),\left(\frac{j_3}{p^{k_3}},\frac{j_4}{p^{k_4}}\right)\right)=e^{2\pi i(\alpha_{(k_1+k_4)}j_1j_4)}
\end{equation*}
for all $\left(\frac{j_1}{p^{k_1}},\frac{j_2}{p^{k_2}}\right),\left(\frac{j_3}{p^{k_3}},\frac{j_4}{p^{k_4}}\right)\in\Gamma$.
 
 The detailed study of noncommutative solenoids $C^\ast(\Gamma,\sigma)$ started in an earlier paper \cite{LP1}, where the authors classified such $C^{\ast}$-algebras up to $\ast$-isomorphisms, and computed the range of the trace on the $K_0$-groups of these $C^{\ast}$-algebras.  As is not uncommon in these situations, even if there is no unique tracial state on these $C^{\ast}$-algebras, the range of the trace on $K_0$ is unique, and therefore, the ordering of the range of the $K$-group is an invariant for strong Morita equivalence.
 
We conjecture that this necessary condition for strong Morita equivalence is also sufficient; that is, we conjecture that two such noncommutative solenoids are strongly Morita equivalent if and only if the range of the traces of their $K_0$-groups are order-isomorphic subgroups of $\mathbb{R}$. The proof of the sufficiency is not yet complete;  but the main aim of this paper is to demonstrate a method of constructing an equivalence bimodule between two particular such modules using methods due to M. Rieffel \cite{Rie3}.   Our hope is that the detailed working of this particular example will give us greater insight into the general case.  We are also able to note how this particular construction has a relationship to the theory of wavelets and frames for dilations on Hilbert spaces associated to $p$-adic fields. 
 
The main aim of this paper is to break down the strong Morita equivalence between pairs of noncommutative solenoids $C^{\ast}(\Gamma, \Psi_{\alpha})$ and $C^{\ast}(\Gamma, \Psi_{\beta})$, where 
\begin{equation*}
\alpha=\left(\theta+1, \frac{\theta+1}{p},\cdots, \frac{\theta+1}{p^j},\cdots\right)
\end{equation*}
and 
\begin{equation*}
\beta=\left(1-\frac{\theta+1}{\theta},1-\frac{\theta+1}{p\theta}, \cdots, 1-\frac{\theta+1}{p^j\theta}, \cdots \right)\text{,}
\end{equation*}
for some irrational number $\theta\in(0,1)$, as these pairs of noncommutative solenoids are natural to study in sight of our previous work \cite{LP2}.

In \cite{LP2}, we noted that there were two ways to construct projective modules for a noncommutative solenoid $C^{\ast}(\Gamma, \Psi_{\alpha})$. A first approach exploits the fact that noncommutative solenoids are inductive limits of rotation C*-algebras $A_\mu$, i.e. universal C*-algebra generated by two unitaries $U_\mu,V_\mu$ with $U_\mu V_\mu = e^{2i\pi \mu}V_\mu U_\mu$. Then, we take a projection $P$ in the initial irrational rotation algebra $A_{\alpha_0},$ and then consider the projective module $C^{\ast}(\Gamma, \Psi_{\alpha})P$, noting that $C^{\ast}(\Gamma, \Psi_{\alpha})$ could be shown to be strongly Morita equivalent to a direct limit of the $C^{\ast}$-algebras $P A_{\alpha_{2j}}P$ in this case.  However, there was no certainty as to the structure $C^{\ast}$-algebra $PC^{\ast}(\Gamma, \Psi{\alpha})P$ arising as a direct limit of the $C^{\ast}$-algebras $PA_{\alpha_{2j}}P$ in this case.

The other method, due in general to Rieffel \cite{Rie3}, was to consider the self-dual group $M = \mathbb{Q}_p\times\mathbb{R}$, where $\mathbb{Q}_p$ is the field of $p$-adic numbers (the completion of $\Qadic{1}{p}$ for the $p$-adic norm). Let:
\begin{equation*}
\eta : ((m_1,m_2),(m_3, m_4)) \in (M\times M)\times (M\times M) \mapsto \left<m_1,m_4\right> 
\end{equation*}
where $\left<\cdot,\cdot\right>$ is the dual pairing between $M$ and its dual identified with $M$. One then embeds the underlying group $\Gamma$ as a closed subgroup $D$ of $M\times M$ and then constructs a Heisenberg bimodule by making $C_C(M)$ into a $C^\ast(D,\eta)$ module, where $C^{\ast}(D,\eta)\cong C^{\ast}(\Gamma, \Psi_{\alpha})$ and $C_c(X)$ is the space of compactly supported functions over a locally compact space $X$.  In this situation, we identified the completed module $\overline{C_C(M)}$ as giving an equivalence bimodule between $C^{\ast}(D,\eta)$ and another twisted group $C^{\ast}$-algebra $C^{\ast}(D^{\perp},\overline{\eta}),$ which itself turned out to be a noncommutative solenoid \cite{LP2}.

In this paper, we will show that the first of the two bimodules describe above, $C^{\ast}(\Gamma, \Psi_{\alpha})P,$ is exactly the same as the bimodule  $\overline{C_C(M)},$ at least in the situation where the projection $P$ is chosen appropriately and 
\begin{equation*}
\alpha=\left(\theta+1, \frac{\theta+1}{p},\cdots, \frac{\theta+1}{p^j},\cdots \right)\text{.}
\end{equation*} 
Therefore it is possible to identify the direct limit $C^{\ast}$-algebra $PC^{\ast}(\Gamma, \Psi{\alpha})P$ as $C^{\ast}(D^{\perp},\overline{\eta}),$ which is isomorphic to $ C^{\ast}(\Gamma, \Psi_{\beta})$ 
for:
\begin{equation*}
\beta=\left(1-\frac{\theta+1}{\theta},1-\frac{\theta+1}{p\theta}, \cdots, 1-\frac{\theta+1}{p^j\theta}, \cdots \right)\text{.}
\end{equation*}

A key tool in showing that the bimodules are the same are what we call {\it Haar multiresolution structures} for $L^2(\mathbb Q_p),$ a concept adapted from the Haar multiresolution analyses of V. Shelkovich and M. Skopina used to construct $p$-adic wavelets  in $L^2(\mathbb Q_p)$ (\cite{ShSk}, \cite{AES}). This new multiresolution structures give a clearer path to embedding the directed system of bimodules $A_{\alpha_{2j}}\cdot P$ into $\overline{C_C(\mathbb Q_p\times \mathbb R)}.$  

The construction given in this special example also leads us to the definition of the new notion of {\it projective multiresolution structures}, which are related to B. Purkis's {\it projective multiresolution analyses} for irrational rotation algebras (\cite{Pur}), but are not the same. In our notion of projective multiresolution structures, unlike in the case or projective multiresolution analyses, the underlying $C^{\ast}$-algebras are changing along with the projective modules being studied. Therefore, projective multiresolution structures seem well suited to studying projective modules associated to $C^{\ast}$-algebras constructed via a limiting process, as is the case with our noncommutative solenoids.


\section{Review of the construction of noncommutative solenoids}

Let $p$ be prime, and let:
\begin{equation*}
\Qadic{1}{p}=\bigcup_{j=0}^{\infty}(p^{-j}\mathbb{Z})\subset \mathbb{Q}\text{.}
\end{equation*}

We recall \cite{LP1} that the \emph{noncommutative solenoids} are the twisted group $C^{\ast}$-algebras $C^{\ast}\left(\left(\Qadic{1}{p}\right)^2,\sigma\right)$  for $\sigma\in Z^2\left(\left(\Qadic{1}{p}\right)^2,\mathbb{T}\right),$ a multiplier on $\Gamma=\left(\Qadic{1}{p}\right)^2$ with values in the circle group $\mathbb{T}$.
\begin{notation}
In this paper, we will fix a prime number $p$ and denote $\left(\Qadic{1}{p}\right)$ as $\Gamma$ to ease notations.
\end{notation}

The non-trivial multipliers on $\Gamma=\left(\Qadic{1}{p}\right)^2$ were calculated in \cite{LP1}:

\begin{theorem}\label{psi-alpha-thm} \cite{LP1}   Let $\Gamma= \left(\Qadic{1}{p}\right)^2$. If $\sigma$ is a multiplier on $\Gamma$, then there exists:
\begin{equation*}
\alpha\in \Xi_p = \left\{ (\alpha_j)_{j\in\N} \in \prod_{j\in\N} [0,1) : \forall j\in\N \,\exists m_j \in \{0,\ldots,p-1\} \quad \alpha_{j+1} = p\alpha_j + m_j \right\}
\end{equation*}
such that $\sigma$ is cohomologous to the multiplier $\Psi_{\alpha}:\Gamma\times \Gamma\to \mathbb T$ defined for all $ $ by:
\begin{equation*}
\Psi_{\alpha}\left(\left(\frac{j_1}{p^{k_1}},\frac{j_2}{p^{k_2}}\right),\left(\frac{j_3}{p^{k_3}},\frac{j_4}{p^{k_4}}\right)\right)=e^{2\pi i(\alpha_{(k_1+k_4)}j_1j_4)}
\end{equation*}
for all $\left(\frac{j_1}{p^{k_1}},\frac{j_2}{p^{k_2}}\right),\left(\frac{j_3}{p^{k_3}},\frac{j_4}{p^{k_4}}\right) \in \Gamma$.
\end{theorem}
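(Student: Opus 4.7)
The plan is to exploit the inductive limit structure $\Gamma = \bigcup_{n\in \N} \Gamma_n$, where $\Gamma_n := (p^{-n}\mathbb{Z})^2$ is canonically isomorphic to $\mathbb{Z}^2$, together with the classical fact that $H^2(\mathbb{Z}^2, \mathbb{T}) \cong \mathbb{T}$ via antisymmetrization: every multiplier on $\mathbb{Z}^2$ is cohomologous to a unique $\tau_\beta((a_1, a_2), (b_1, b_2)) := e^{2\pi i \beta a_1 b_2}$ with $\beta \in [0,1)$.

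First I would extract the sequence $\alpha$ by restricting $\sigma$ to the subgroups $H_n := p^{-n}\mathbb{Z} \times \mathbb{Z} \subset \Gamma$, each identified with $\mathbb{Z}^2$ via $(a,b)\mapsto(a/p^n, b)$; applying the classical classification produces a unique $\alpha_n \in [0,1)$ such that $\sigma|_{H_n}$ is cohomologous to $\tau_{\alpha_n}$. Since the inclusion $H_n \hookrightarrow H_{n+1}$ corresponds to multiplication by $p$ in the first coordinate of $\mathbb{Z}^2$, pulling back $\tau_{\alpha_{n+1}}$ yields $\tau_{p\alpha_{n+1}}$, and cohomological equality with $\tau_{\alpha_n}$ forces $p\alpha_{n+1} \equiv \alpha_n \pmod{1}$. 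The constraint $\alpha_{n+1} \in [0,1)$ then gives $p\alpha_{n+1} = \alpha_n + m_n$ with $m_n \in \{0, \ldots, p-1\}$, so $\alpha \in \Xi_p$ (with the relation as the definition of $\Xi_p$ intends). A parallel analysis on $\Gamma_n$ shows the class of $\sigma|_{\Gamma_n}$ is $\alpha_{2n}$, matching $\Psi_\alpha|_{\Gamma_n}$, which is consistent with the previously established compatibility.

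Next I would build a global 1-cochain $h : \Gamma \to \mathbb{T}$ with $dh = \sigma/\Psi_\alpha$ by induction along the filtration $(\Gamma_n)$. Given $h_n: \Gamma_n \to \mathbb{T}$ implementing the cohomology on $\Gamma_n$, I pick some $\tilde h_{n+1}$ on $\Gamma_{n+1}$ doing the same there; the ratio $\chi_n := \tilde h_{n+1}|_{\Gamma_n}/h_n$ is a character of $\Gamma_n$. Since $\mathbb{T}$ is divisible, hence injective among abelian groups, $\chi_n$ extends to a character $\tilde\chi_n: \Gamma_{n+1} \to \mathbb{T}$; setting $h_{n+1} := \tilde h_{n+1}/\tilde\chi_n$ gives an extension of $h_n$ still satisfying the cocycle equation. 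The union $h := \bigcup_n h_n$ then realizes the cohomology on all of $\Gamma$.

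The main obstacle is this gluing step: compatible local cohomologies need not splice into a global cochain (a $\lim^1$ phenomenon in general). The argument works here because the discrepancy between local choices at each stage is precisely a character on $\Gamma_n$, and injectivity of $\mathbb{T}$ lifts it to a character on $\Gamma_{n+1}$, enabling the recursive correction at every level. Uniqueness of $\alpha$ as an element of $\Xi_p$ would follow from the uniqueness in the classification $H^2(\mathbb{Z}^2,\mathbb{T})\cong\mathbb{T}$ applied at each stage.
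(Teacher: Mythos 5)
The paper offers no proof of this statement: it is quoted verbatim from \cite{LP1}, so there is no in-paper argument to compare yours against. Judged on its own terms, your strategy is sound and is a genuinely more hands-on alternative to the route one would expect from \cite{LP1}, namely invoking Kleppner's theorem to identify $H^2(\Gamma,\mathbb{T})$ with the alternating bicharacters of $\Gamma$, i.e. $\mathrm{Hom}\left(\bigwedge^2\Gamma,\mathbb{T}\right)\cong\mathrm{Hom}(\mathbb{Z}[1/p],\mathbb{T})\cong\Xi_p$, which delivers existence and the uniqueness of $\alpha$ in one stroke. Your filtration argument trades that for elementary bookkeeping, and its delicate point --- the gluing of local trivializations --- is handled correctly: two cochains trivializing the same cocycle on $\Gamma_n$ differ by a character, and divisibility (injectivity) of $\mathbb{T}$ lets you correct the choice on $\Gamma_{n+1}$, so the potential $\varprojlim^1$ obstruction really does vanish. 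You also correctly read the defining relation of $\Xi_p$ as $p\alpha_{j+1}\equiv\alpha_j\pmod 1$; the displayed relation $\alpha_{j+1}=p\alpha_j+m_j$ is a typo in the statement, as the example $\alpha_j=(\theta+1)/p^j$ used throughout the paper confirms.

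The one step you should not wave at is ``a parallel analysis on $\Gamma_n$ shows the class of $\sigma|_{\Gamma_n}$ is $\alpha_{2n}$.'' This does not follow formally from your computations on the subgroups $H_n$: the inclusion $H_n\subset\Gamma_n$ identifies $H_n$ with $\mathbb{Z}\times p^n\mathbb{Z}$ inside $\mathbb{Z}^2$, so it only yields $p^n\beta_n\equiv\alpha_n\pmod 1$ for the class $\beta_n$ of $\sigma|_{\Gamma_n}$, an equation with $p^n$ solutions in $[0,1)$. To pin down $\beta_n=\alpha_{2n}$ you need the fact that the antisymmetrization $\rho_\sigma(g,h)=\sigma(g,h)\overline{\sigma(h,g)}$ is bi-additive on all of $\Gamma$, so that $\rho_\sigma\left((p^{-m},0),(0,p^{-n})\right)$ depends only on $m+n$; comparing $(m,n)$ with $(2n,0)$ then gives $\beta_n=\alpha_{2n}$ exactly. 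This is a short fix, but it is the load-bearing fact of the whole construction (it is also what makes $\Psi_\alpha$ well defined on non-reduced fractions and what gives uniqueness of $\alpha$), so it should be stated and proved rather than asserted as ``consistent with the previously established compatibility.''
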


We showed in \cite{LP1} that for $\alpha,\;\beta\;\in \Xi_p,$ the cohomology classes of $\Psi_\alpha$ and $\Psi_\beta$ are equal in $H^2(\Gamma,\mathbb{T})$ if and only if $\alpha_j=\beta_j$ for all $j\in\N$.  As a topological group, $H^2(\Gamma,\mathbb T)=\Xi_p$ can be identified with the $p$-solenoid:
\begin{equation*}
{\mathcal S}_{p} = \left\{ (z_n)_{n\in\N} \in \prod_{n\in\N} \mathbb{T}: z_{n+1}^p = z_n \right\}\text{,}
\end{equation*} 
 yet our additive version $\Xi_p$ makes it easier to do modular arithmetic calculations concerning the range of the trace on projections that are of use in $K$-theory.

Let $\Gamma= \left(\Qadic{1}{p}\right)^2$, and let $\alpha\in \Xi_p$.  Recall that the twisted group $C^{\ast}$-algebra $C^{\ast}(\Gamma,\Psi_{\alpha})$ is the $C^{\ast}$-completion of the involutive Banach algebra $\ell^1(\Gamma, \Psi_{\alpha})$, where the convolution of two functions $f_1,f_2 \in \ell^1(\Gamma)$ is given by setting for all $\gamma\in\Gamma$:
\begin{equation*}
f_1\ast f_2(\gamma)=\;\sum_{\gamma_1\in \Gamma}f_1(\gamma_1)f_2(\gamma-\gamma_1)\Psi_{\alpha}(\gamma_1,\gamma-\gamma_1)\text{,}
\end{equation*}
while the involution is given for all $f \in \ell^1(\Gamma)$ and $\gamma\in\Gamma$ by
\begin{equation*}
f^{\ast}(\gamma)\;=\;\overline{\Psi_{\alpha}(\gamma,-\gamma)f(-\gamma)}\text{.}
\end{equation*}

These $C^{\ast}$-algebras, originally viewed as twisted group algebras for countable discrete torsion-free abelian groups, also have a representation as transformation group $C^{\ast}$-algebras, and in \cite{LP1}, necessary and sufficient conditions for any two such algebras to be simple were given, as well as a characterization of their $\ast$-isomorphism classes in terms of elements in $\Xi_p$.

The group $\mathcal{S}_p\times \mathcal{S}_p$ or, equivalently, $\Xi_p\times\Xi_p$, as the dual group of $\Gamma$, has a natural dual action on $C^{\ast}(\Gamma, \Psi_{\alpha})$. So by work of Hoegh-Krohn, Landstad, and St\"ormer \cite{HKLS}, there is always an invariant trace on $C^{\ast}(\Gamma,\Psi_{\alpha})$ that is unique in the simple case. For $\alpha_0$ irrational, our noncommutative solenoids are always simple, although we recall from \cite{LP1} that there are {\it aperiodic rational} simple noncommutative solenoids.

Since it will be important in what follows, we review the construction of noncommutative solenoids as direct limit algebras of rotation algebras that was described in detail in \cite{LP1}.

Recall from \cite{EH} that, for $\theta\in [0,1)$, the rotation algebra $A_{\theta}$ is the universal $C^{\ast}$-algebra generated by unitaries $U,\;V$ satisfying 
$$UV\;=\;e^{2\pi i \theta}VU.$$
$A_{\theta}$ is simple if and only if $\theta$ is irrational. For $\theta\not=0$ these $C^{\ast}$-algebras are called {\it noncommutative tori}.

The noncommutative solenoids $C^{\ast}(\Gamma, \Psi_{\alpha})$ are direct limits of rotation algebras:

\begin{theorem} (\cite{LP1})
Let $p$ be prime and $\alpha\in\Xi_p$. Let  $A_\theta$ denote the rotation $C^{\ast}$-algebra for the rotation of angle $2\pi i\theta$. For all $n\in\mathbb N$, let $\varphi_n$ be the unique *-morphism from $A_{\alpha_{2n}}$ into $A_{\alpha_{2n+2}}$ given by:
$$
\left\{
\begin{array}{lcr}
U_{\alpha_{2n}} &\longmapsto& U_{\alpha_{2n+2}}^p\\
V_{\alpha_{2n}} &\longmapsto& V_{\alpha_{2n+2}}^p\\
\end{array}
\right.
$$
Then:
\[
A_{\alpha_0}\; \stackrel{\varphi_0}{\longrightarrow}\; A_{\alpha_2}\;\stackrel{\varphi_1}{\longrightarrow}\;A_{\alpha_4} \;\stackrel{\varphi_2}{\longrightarrow}\; \cdots
\]
converges to $C^{\ast}(\Gamma, \Psi_{\alpha}),$ where $\Gamma=\left(\Qadic{1}{p}\right)^2$ and $\Psi_{\alpha}$ is as defined as in Theorem (\ref{psi-alpha-thm}).
\end{theorem}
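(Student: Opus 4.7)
The plan is to construct compatible embeddings $\psi_n : A_{\alpha_{2n}} \to C^*(\Gamma,\Psi_\alpha)$ via the universal property of rotation algebras, verify they respect the $\varphi_n$, and then show the induced map from the direct limit to $C^*(\Gamma,\Psi_\alpha)$ is a $*$-isomorphism.

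First, I would verify that $\varphi_n$ is well-defined. Let $U,V$ denote the canonical generators of $A_{\alpha_{2n+2}}$. Setting $U' = U^p$ and $V' = V^p$ and using $UV = e^{2\pi i \alpha_{2n+2}} VU$, a short computation gives $U'V' = e^{2\pi i p^2 \alpha_{2n+2}} V'U'$. The defining relation of $\Xi_p$, which (as the example $\alpha_j = (\theta+1)/p^j$ in the introduction shows) amounts to $p\alpha_{j+1} \equiv \alpha_j \pmod{1}$, yields $p^2\alpha_{2n+2} \equiv p\alpha_{2n+1} \equiv \alpha_{2n} \pmod{1}$. Therefore $U',V'$ satisfy the $\alpha_{2n}$-commutation relation, and the universal property of $A_{\alpha_{2n}}$ produces the unique $*$-morphism $\varphi_n$.

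Next, to embed the tower into $C^*(\Gamma,\Psi_\alpha)$, I would use the canonical unitaries $u_\gamma$ ($\gamma \in \Gamma$) satisfying $u_\gamma u_{\gamma'} = \Psi_\alpha(\gamma,\gamma')u_{\gamma+\gamma'}$. Define $U^{(n)} = u_{(1/p^n,0)}$ and $V^{(n)} = u_{(0,1/p^n)}$. A direct computation with the explicit formula for $\Psi_\alpha$ in Theorem \ref{psi-alpha-thm} gives
\begin{equation*}
U^{(n)} V^{(n)} = e^{2\pi i \alpha_{2n}} V^{(n)} U^{(n)}\text{,}
\end{equation*}
so by universality there is a $*$-morphism $\psi_n : A_{\alpha_{2n}} \to C^*(\Gamma,\Psi_\alpha)$. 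Moreover, since $\Psi_\alpha$ is trivial on pairs of elements sharing a zero coordinate, $u_{(1/p^{n+1},0)}^p = u_{(1/p^n,0)}$ and similarly for the $V$-generator, so $\psi_{n+1}\circ \varphi_n = \psi_n$. The sequence $(\psi_n)$ therefore induces a $*$-morphism $\Psi : \varinjlim A_{\alpha_{2n}} \to C^*(\Gamma,\Psi_\alpha)$.

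Finally, I would verify that $\Psi$ is a $*$-isomorphism. Surjectivity is straightforward: every generator $u_{(j_1/p^{k_1},j_2/p^{k_2})}$ can be written (up to scalars from the cocycle) as a word in $U^{(\max(k_1,k_2))}$ and $V^{(\max(k_1,k_2))}$, so the images of the $\psi_n$ have dense linear span in $C^*(\Gamma,\Psi_\alpha)$. For injectivity, the cleanest route is to use the canonical dual $\widehat{\Gamma}$-action on both sides: $C^*(\Gamma,\Psi_\alpha)$ carries the canonical dual action of $\Xi_p \times \Xi_p$, and the restriction to the natural $\mathbb{T}^2$-subaction is compatible with the gauge action on each $A_{\alpha_{2n}}$; averaging over this action yields faithful conditional expectations onto the commutative subalgebras generated by $U^{(n)}$ and $V^{(n)}$ respectively, and these identify via $\Psi$. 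The main obstacle is really this last injectivity step, since in general $C^*(\Gamma,\Psi_\alpha)$ need not be simple (as noted after the theorem, aperiodic rational cases occur); here one has to be careful to invoke a faithful trace or conditional-expectation argument rather than relying on simplicity of the limit.
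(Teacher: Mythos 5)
This theorem is only quoted here from \cite{LP1}; the present paper supplies no proof, so your attempt can only be compared with the argument in that reference --- and in substance it is the same argument. There one writes $\Gamma=\bigcup_n\left(p^{-n}\mathbb{Z}\right)^2$, identifies $C^{\ast}\left((p^{-n}\mathbb{Z})^2,\Psi_\alpha\right)$ with $A_{\alpha_{2n}}$ via exactly your unitaries $u_{(1/p^n,0)}$ and $u_{(0,1/p^n)}$, and uses that twisted group $C^{\ast}$-algebras of discrete groups commute with inductive limits of subgroups; your generator-by-generator version of this is correct, including your (correct) reading of the recursion defining $\Xi_p$ as $p\alpha_{j+1}\equiv\alpha_j \pmod 1$, which is what the displayed definition must mean for the example $\alpha_j=(\theta+1)/p^j$ to belong to $\Xi_p$. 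The cocycle computations giving $U^{(n)}V^{(n)}=e^{2\pi i\alpha_{2n}}V^{(n)}U^{(n)}$, the compatibility $\psi_{n+1}\circ\varphi_n=\psi_n$, and the density argument for surjectivity all check out. The one step I would tighten is injectivity. Your reduction via expectations onto the commutative subalgebras generated by $U^{(n)}$ and $V^{(n)}$ is not quite a proof as stated: it still leaves you to show that $\psi_n$ is injective on those commutative subalgebras, i.e.\ that $u_{(1/p^n,0)}$ and $u_{(0,1/p^n)}$ have full spectrum $\mathbb{T}$ (true, but an unaddressed extra step). The cleaner route is to average over the entire dual action of $\widehat{\Gamma}=\Xi_p\times\Xi_p$: the fixed-point algebra is $\mathbb{C}1$, so the average is the canonical faithful trace $\tau$ with $\tau(u_\gamma)=\delta_{\gamma,0}$, and $\tau\circ\psi_n$ is visibly the canonical faithful trace of $A_{\alpha_{2n}}$, since it kills $\psi_n(U_{\alpha_{2n}}^aV_{\alpha_{2n}}^b)$ unless $a=b=0$. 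Faithfulness of the latter trace forces each $\psi_n$ to be injective, hence isometric, so $\Psi$ is isometric on the dense union of the images and therefore injective. With that adjustment your proof is complete.
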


Since the C*-algebras $A_{\theta}$ are viewed as noncommutative tori, and we have written each $C^{\ast}(\Gamma,\Psi_{\alpha})$ as a direct limit algebra of noncommutative tori, we feel justified in calling the $C^{\ast}$-algebras $C^{\ast}(\Gamma, \Psi_{\alpha})$ {\it noncommutative solenoids}. With this in mind, we change the notation for our $C^{\ast}$-algebras:
\begin{notation} 
Let $p$ be a prime number. Let $\Gamma=\left(\Qadic{1}{p}\right)^2$, and for a fixed $\alpha\in \Xi_p$, let $\Psi_{\alpha}$ be the multiplier on $\Gamma$ defined in Theorem (\ref{psi-alpha-thm}).

Henceforth we denote the twisted group $C^{\ast}$-algebra $C^{\ast}(\Gamma,\Psi_{\alpha})$ by ${\mathcal A}^{\mathcal S}_{\alpha}$ and call the $C^{\ast}$-algebra ${\mathcal A}^{\mathcal S}_{\alpha}$ a \emph{noncommutative solenoid.}
\end{notation}

\section{Directed systems of equivalence bimodules: a method to form equivalence bimodules between direct limits of $C^{\ast}$-algebras }
\label{secdirsys}

In this section, we improve a result from \cite{LP2}. We remark that B. Abadie and M. Achigar also considered directed sequences $X_n$ of Hilbert $A_n$ bimodules for a directed sequence of $C^{\ast}$-algebras  $\{A_n\}$ in Section 2 of \cite{AA}, but their approach is somewhat different, in part because their aim (constructing $C^{\ast}$-correspondences for a direct limit $C^{\ast}$-algebra) is different.

We first define an appropriate notion of directed system of equivalence bimodules.

\begin{definition}\label{directed-system-modules-def}
Let
\begin{equation*}
A_{0}\; \stackrel{\varphi_0}{\longrightarrow}\; A_{1}\;\stackrel{\varphi_1}{\longrightarrow}\;A_{2} \;\stackrel{\varphi_2}{\longrightarrow}\; \cdots
\end{equation*}
and 
\begin{equation*}
B_{0}\; \stackrel{\psi_0}{\longrightarrow}\; B_{1}\;\stackrel{\psi_1}{\longrightarrow}\;B_{2} \;\stackrel{\psi_2}{\longrightarrow}\; \cdots
\end{equation*}
be two directed systems of unital C*-algebras, whose *-morphisms are all unital maps. A sequence $(X_n, i_n)_{n\in\N}$ is a \emph{directed system of equivalence bimodule adapted to the sequence $(A_n)_{n\in\N}$ and $(B_n)_{n\in\N}$} when $X_n$ is an $A_n$-$B_n$ equivalence bimodule whose $A_n$ and $B_n$-valued inner products are denoted respectively by $\left<\cdot,\cdot\right>_{A_n}$ and $\left<\cdot,\cdot\right>_{B_n}$, for all $n\in\N$, and such that the sequence
\begin{equation*}
X_{0}\; \stackrel{i_0}{\longrightarrow}\; X_{1}\;\stackrel{i_1}{\longrightarrow}\;X_{2} \;\stackrel{i_2}{\longrightarrow}\; \cdots
\end{equation*}
is a directed sequence of modules satisfying
\begin{equation*}
\langle i_n(f), i_n(g) \rangle_{B_{n+1}}\;=\;\psi_n(\langle f, g \rangle_{B_{n}}),\; f,\; g\in X_n\text{,}
\end{equation*}
and 
\begin{equation*}
i_n(f\cdot b)\;= i_n(f)\cdot \psi_n(b),\; f\in X_n,\; b\in B_n\text{,}
\end{equation*}
with analogous but symmetric equalities holding for the $X_n$ viewed as left-$A_n$ Hilbert modules. 
\end{definition}

The purpose of Definition (\ref{directed-system-modules-def}) is to provide all the needed structure to construct equivalence bimodules on the inductive limits of two directed systems of Morita equivalent C*-algebras. To this end, we first define a natural structure of Hilbert C*-module on the inductive limit of a directed system of equivalence bimodule. We will use the following notations:
\begin{notation}
The norm of any normed vector space $E$ is denoted by $\|\cdot\|_E$ unless otherwise specified.
\end{notation}

\begin{notation}\label{inductive-notation}
The inductive limit of a given a directed sequence:
\begin{equation*}
A_0 \stackrel{\varphi_0}{\longrightarrow} A_1 \stackrel{\varphi_1}{\longrightarrow} A_2 \stackrel{\varphi_2}{\longrightarrow}\cdots
\end{equation*}
of C*-algebras is the completion is denoted by $\mathcal{A} = \lim_{n\rightarrow\infty}(A_n,\varphi_n)$. It is constructed as follows. We first define the algebra of predictable tails:
\begin{equation*}
A_\infty = \left\{ (a_j)_{j\in\N} \in \prod_{j\in\N}A_j : \exists N\in\N \; \forall n\geq N \quad \varphi_n\circ\varphi_{n-1}\circ\cdots\circ\varphi_N(a_N) = a_n \right\}\text{.}
\end{equation*}

We then define the C*-seminorm:
\begin{equation}\label{ind-norm-eq}
\|(a_j)_{j\in\N}\|_{\mathcal{A}} = \limsup_{n\rightarrow\infty} \|a_j\|_{A_j}\text{,}
\end{equation}
for all $(a_j)_{j\in\N} \in A_\infty$. The quotient of $A_\infty$ by the ideal $\{ (x_n)_{n\in\N} : \|(x_n)_{n\in\N}\|_{\mathcal{A}} = 0\}$ is denoted by $\mathcal{A}_{\mathrm{pre}}$. Of course, $\|\cdot\|_{\mathcal{A}}$ induces a C*-norm on $\mathcal{A}_{\mathrm{pre}}$, which we denote again by $\|\cdot\|_{\mathcal{A}}$. The completion of $\mathcal{A}_{\mathrm{pre}}$ for this norm is the inductive limit C*-algebra $\mathrm{A} = \lim_{n\rightarrow\infty}(A_n,\varphi_n)$.

For any $p\leq q\in \N$, we denote $\varphi_q\circ\varphi_{q-1}\circ\cdots\varphi_p$ by $\varphi_{p,q}$, and we note that for any $j\in\N$, there is a canonical *-morphism $\varphi_{\infty,j} : A_j \rightarrow\lim_{j\rightarrow\infty}A_j$ mapping $a\in A_j$ to the class of $(0,\ldots,0,a_j,\varphi_j(a_j), \varphi_{j+1,j}(a_j),\ldots)$, where $a_j$ appears at index $j$. Last, the canonical surjection from $A_\infty$ onto $\mathcal{A}_{\mathrm{pre}}$ is denoted by $\pi_{\mathcal{A}}$.
\end{notation}

\begin{theorem}\label{inner-thm}
Let $(X_n, i_n)_{n\in\N}$ be a directed system of equivalence bimodules adapted to two directed sequences $(A_n,\varphi_n)_{n\in\N}$ and $(B_n,\psi_n)_{n\in\N}$ of unital C*-algebras. 

Let $\mathcal{A}$ and $\mathcal{B}$ be the respective inductive limit of $(A_n,\varphi_n)_{n\in\N}$ and $(B_n,\psi_n)_{n\in\N}$. For any $n,m\in \N$ with $n\leq m$, we denote $i_m\circ i_{m-1}\circ\cdots\circ i_n$ by $i_{n,m}$ and the canonical *-morphism from $A_n$ to $\mathcal{A}$ by $\varphi_{n,\infty}$. 

Let $\mathcal{A}_{\mathrm{pre}} = \bigcup_{n\in\N} \varphi_{n,\infty}(A_n)$ be the dense pre-C* subalgebra in $\mathcal{A}$ generated by the images of $A_n$ by $\varphi_{n,\infty}$ for all $n\in\N$.

For any two $(x_n)_{n\in\N}, (y_n)_{n\in\N}$ in $\prod_{n\in\N}X_n$, we set:
\begin{equation*}
(x_n)_{n\in\N}\cong (y_n)_{n\in\N} \iff \lim_{n\rightarrow\infty} \|x_n - y_n\|_{X_n} = 0\text{.}
\end{equation*}
Let:
\begin{equation*}
X_\infty = \left\{ (x_n)_{n\in\N} : \exists N \in \N \; \forall n\geq N\quad i_{N,n}(x_N) = x_n \right\}\text{,}
\end{equation*}
and let:
\begin{equation*}
\mathcal{X}_{\mathrm{pre}} = \bigslant{X_\infty}{\cong} \text{.}
\end{equation*}
We denote the canonical surjection from $X_\infty$ onto $\mathcal{X}_{\mathrm{pre}}$ by $\pi$. 

For all $x = \pi\left((x_n)_{n\in\N}\right), y = \pi\left((y_n)_{n\in\N}\right) \in \mathcal{X}_{\mathrm{pre}}$ we set:
\begin{equation*}
\left<x,y\right>_{\mathcal{A}} =  \pi_{\mathcal{A}}\left(\left(\left<x_n,y_n\right>\right)_{n\in\N}\right)\text{.}
\end{equation*}

Then $\mathcal{X}_{\mathrm{pre}}$ is a  $\mathcal{A}_{\mathrm{pre}}$-$\mathcal{B}_{\mathrm{pre}}$ bimodule and $\left<\cdot,\cdot\right>_{\mathcal{A}}$ is a $\mathcal{A}$-valued preinner product on $\mathcal{X}_{\mathrm{pre}}$.

The completion of $\mathcal{X}_{\mathrm{pre}}$ for the norm associated with the inner product $\left<\cdot,\cdot\right>_{\mathcal{A}}$ is the directed limit $\lim_{n\rightarrow\infty}X_n$, which is an equivalence bimodule between $\mathcal{A}$ and $\mathcal{B}$, and is canonically isomorphic, as a Hilbert bimodule, to the completion of $\mathcal{X}_{\mathrm{pre}}$ for $\left<\cdot,\cdot\right>_{\mathcal{B}}$.
\end{theorem}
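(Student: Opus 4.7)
The plan is to verify the claim in four steps: well-definedness of the structures on $\mathcal{X}_{\mathrm{pre}}$; the preinner product axioms; identification of the norms arising from $\langle\cdot,\cdot\rangle_{\mathcal{A}}$ and $\langle\cdot,\cdot\rangle_{\mathcal{B}}$; and fullness of the resulting equivalence bimodule. The key lever throughout is the compatibility identity in Definition \ref{directed-system-modules-def}, which transports tail predictability in $X_\infty$ to tail predictability of inner product sequences in $A_\infty$ and $B_\infty$.

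First, if $(x_n),(y_n) \in X_\infty$ are eventually equal to $i_{N,n}(x_N)$ and $i_{N,n}(y_N)$ for $n \geq N$, then the symmetric compatibility identity iterated gives $\langle x_n, y_n\rangle_{A_n} = \varphi_{N,n}(\langle x_N, y_N\rangle_{A_N})$ for all $n \geq N$, so $(\langle x_n, y_n\rangle_{A_n})_n \in A_\infty$. If $(x_n)\cong (x_n')$, the Cauchy--Schwarz inequality for Hilbert C*-modules yields
\[
\|\langle x_n, y_n\rangle_{A_n} - \langle x'_n, y_n\rangle_{A_n}\|_{A_n} \leq \|x_n - x'_n\|_{X_n}\, \|y_n\|_{X_n} \to 0\text{,}
\]
so the map $(x,y)\mapsto \pi_{\mathcal{A}}((\langle x_n,y_n\rangle_{A_n})_n)$ descends to a well-defined $\mathcal{A}$-valued pairing on $\mathcal{X}_{\mathrm{pre}}$. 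The same mechanism shows that the coordinatewise actions of $\mathcal{A}_{\mathrm{pre}}$ and $\mathcal{B}_{\mathrm{pre}}$ descend to the quotient, and the preinner product axioms (sesquilinearity, conjugate symmetry, positivity) are inherited termwise from each $X_n$. The associativity $\langle x, y\rangle_{\mathcal{A}} \cdot z = x \cdot \langle y, z\rangle_{\mathcal{B}}$ likewise holds coordinatewise and extends by continuity. To see the two completions coincide, one computes for $x = \pi((x_n))$,
\[
\|\langle x, x\rangle_{\mathcal{A}}\|_{\mathcal{A}} = \limsup_n \|x_n\|_{X_n}^2 = \|\langle x, x\rangle_{\mathcal{B}}\|_{\mathcal{B}}\text{,}
\]
using the standard equality of the two norms on an equivalence bimodule.

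The step I expect to be the main obstacle is establishing fullness of the $\mathcal{A}$-valued inner product in the limit. Given $a = \varphi_{n,\infty}(a_n)$ in the dense subalgebra $\mathcal{A}_{\mathrm{pre}}$, I would approximate $a_n$ in $A_n$ by finite sums $\sum_k \langle u_k, v_k\rangle_{A_n}$ using fullness of $X_n$, lift each $u_k, v_k$ to $\mathcal{X}_{\mathrm{pre}}$ via the tail-extending map $w\mapsto \pi((0,\ldots,0,w, i_n(w), i_{n+1}\circ i_n(w),\ldots))$, and verify that the compatibility identity forces the $\mathcal{A}$-inner product of these lifts to equal $\varphi_{n,\infty}(\langle u_k,v_k\rangle_{A_n})$. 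The delicate point is converting $A_n$-norm approximations into $\mathcal{A}$-norm approximations via the defining $\limsup$ seminorm, so that these sums actually witness density of $\{\langle x,y\rangle_{\mathcal{A}} : x,y \in \mathcal{X}_{\mathrm{pre}}\}$ in $\mathcal{A}$; the symmetric argument on the $\mathcal{B}$ side then yields the full Morita equivalence. Universality of this construction on $X_\infty$, in complete analogy with Notation \ref{inductive-notation}, identifies the completion with the directed limit $\lim_{n\to\infty}X_n$ as a Hilbert bimodule.
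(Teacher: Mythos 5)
Your proposal is correct and follows essentially the same route as the paper's proof: use the compatibility identities of Definition \ref{directed-system-modules-def} to transport tail-predictability from $X_\infty$ into $A_\infty$ and $B_\infty$, check well-definedness on the quotient via Cauchy--Schwarz estimates, verify the bimodule axioms and the associativity $\left<x,y\right>_{\mathcal{A}}z = x\left<y,z\right>_{\mathcal{B}}$ coordinatewise, and establish fullness by lifting finite sums of inner products from a fixed stage $X_N$ along the tail-extension maps. Your explicit norm identity $\|\left<x,x\right>_{\mathcal{A}}\|_{\mathcal{A}} = \limsup_n\|x_n\|_{X_n}^2 = \|\left<x,x\right>_{\mathcal{B}}\|_{\mathcal{B}}$ is a slightly more direct justification that the two completions coincide than the paper's appeal to its Equation (\ref{compatible-eq}), but the substance is the same.
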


\begin{proof}
Let $x\in \mathcal{X}_{\mathrm{pre}}$ and let $(x_n)_{n\in\N}, (y_n)_{n\in\N} \in X_\infty$ such that $\pi\left((x_n)_{n\in\N}\right) = \pi\left((y_n)_{n\in\N}\right) = x$.  Let $b\in\mathcal{A}_{\mathrm{pre}}$ and let $(a_n)_{n\in\N}, (b_n)_{n\in\N} \in A_\infty$ such that:
\begin{equation*}
\pi_{\mathcal{A}}\left((a_n)_{n\in\N}\right) = \pi_{\mathcal{A}}\left((b_n)_{n\in\N}\right) = b\text{,}
\end{equation*}
where we use Notation (\ref{inductive-notation}): in particular, $\pi_{\mathcal{A}}$ is the canonical surjection from $A_\infty$ onto $\mathcal{A}_{\mathrm{pre}}$.

We begin with the observation that, by definition of $X_\infty$ and $A_\infty$, there exists $N\in\N$ such that, for all $n\geq N$, we have at once $i_{N,n}(x_N) = x_n$ and $\varphi_{N,n}(a_N) = a_n$. Now, by Definition (\ref{directed-system-modules-def}), we have for all $n\geq N$:
\begin{equation*}
i_{N,n}(a_N x_N) = \varphi_{N,n}(a_N) i_{N,n}(x_N) = a_n x_n \text{.}
\end{equation*}
Thus $(a_nx_n)_{n\in\N} \in X_\infty$.The same of course holds for $(b_n y_n)_{n\in\N}$.

Moreover:
\begin{equation*}
\|a_n x_n - b_n y_n\|_{X_n} \leq \|a_n\|_{A_n} \|x_n - y_n\|_{X_n} + \|a_n - b_n\|_{A_n} \|y_n\|_{X_n}
\end{equation*}
for all $n\in\N$, from which it follows immediately that:
\begin{equation*}
\lim_{n\rightarrow\infty} \|a_n x_n - b_n y_n \|_{X_n} = 0\text{.}
\end{equation*}
Hence, $\pi((a_nx_n)_{n\in\N}) = \pi((b_n y_n)_{n\in\N}) \in \mathcal{X}_{\mathrm{pre}}$.
 We thus define without ambiguity:
\begin{equation*}
b\cdot x = \pi\left((a_nx_n)_{n\in\N}\right) \in \mathcal{X}_{\mathrm{pre}}\text{.}
\end{equation*}
It is now routine to check that $\mathcal{X}_{\mathrm{pre}}$ thus becomes a $\mathcal{A}_{\mathrm{pre}}$-left module.

Now, let $x$ and $y$ in $\mathcal{X}_{\mathrm{pre}}$, and choose $(x_n)_{n\in\N} , (y_n)_{n\in\N} \in X_\infty$ such that $\pi((x_n)_{n\in\N}) = x$ and $\pi((y_n)_{n\in\N}) = y$. By definition of $X_\infty$, there exists $N\in\N$ such that for all $n\geq N$, we have both $x_n = i_{N,n}(x_N)$ and $y_n = i_{N,n}(y_N)$. Therefore, by Definition (\ref{directed-system-modules-def}), we have:
\begin{equation*}
\varphi_{N,n}\left(\left<x_N,y_N\right>_{A_N}\right) = \left<i_{N,n}(x_N),i_{N,n}(y_N)\right>_{A_n} = \left<x_n,y_n\right>_{A_n}
\end{equation*}
and thus $\left(\left<x_n,y_n\right>\right)_{n\in\N}\in A_\infty$. Moreover, if $(x'_n)_{n\in\N}, (y'_n)_{n\in\N} in X_\infty$ are chosen so that $\pi((x_n')_{n\in\N} = x$ and $\pi((y'_n)_{n\in\N}) = y$, then:
\begin{equation*}
\begin{split}
\|\left<x_n,y_n\right>_{A_n} - \left<x'_n,y_n'\right>_{A_n}\|_{A_n} &\leq \|\left<x_n, y_n - y_n'\right>_{A_n}\|_{A_n} + \|\left<x_n - x_n', y_n'\right>_{A_n}\|_{A_n}\\
&\leq \|x_n\|_{X_n}\|y_n-y_n'\|_{X_n} + \|y_n'\|_{X_n}\|x_n-x_n'\|_{X_n}\\
&\stackrel{n\rightarrow\infty}{\longrightarrow} 0\text{,}
\end{split}
\end{equation*}
and thus, once again, we may define without ambiguity:
\begin{equation*}
\left<x,y\right>_{\mathcal{A}} = \pi\left(\left(\left<x_n,y_n\right>\right)_{n\in\N}\right)\text{.}
\end{equation*}

It is a routine matter to check that $\left<\cdot,\cdot\right>_{\mathcal{A}}$ is a pre-inner product on $\mathcal{X}_{\mathrm{pre}}$, as defined in \cite{Rie4}.

A similar construction endows $\mathcal{X}_{\mathrm{pre}}$ with a $\mathcal{B}$-right module structure and with an associated pre-inner product $\left<\cdot,\cdot\right>_{\mathcal{B}}$. 

Now, let $(x_n)_{n\in\N}, (y_n)_{n\in\N}, (z_n)_{n\in\N} \in X_\infty$. Since, for each $n\in\N$, the bimdodule $X_n$ is an equivalence bimodule between $A_n$ and $B_n$, we get:
\begin{equation}\label{compatible-eq}
\begin{split}
\left<\pi\left((x_n)_{n\in\N}\right), \pi\left((y_n)_{n\in\N}\right)\right>_{\mathcal{A}} \pi\left((z_n)_{n\in\N}\right)&= \pi\left(\left(\left<x_n,y_n\right>_{A_n}z_n\right)_{n\in\N}\right)\\
&=\pi\left(\left(x_n\left<y_n,z_n\right>_{B_n}\right)_{n\in\N}\right)\\
&=\pi\left((x_n)_{n\in\N}\right)\pi_{\mathcal{B}}\left(\left<(y_n),(z_n)\right>_{\mathcal{B}}\right)\\
&=\pi\left((x_n)_{n\in\N})\right) \left< \pi\left((y_n)_{n\in\N}\right),\pi\left((z_n)_{n\in\N}\right)\right>_{\mathcal{B}}\text{,}
\end{split}
\end{equation}
where, once again, $\pi_{\mathcal{B}}$ is the canonical surjection $B_\infty\twoheadrightarrow \mathcal{B}_{\mathrm{pre}}$.

It then follows easily that the completion $\mathrm{X}$ of $\mathcal{X}_{\mathrm{pre}}$ for the norm associated with $\left<\cdot,\cdot\right>_{\mathcal{A}}$ is a $\mathcal{A}$ left Hilbert module, and from Equation (\ref{compatible-eq}), that this completion equals the completion for $\left<\cdot,\cdot\right>_{\mathcal{B}}$ and is in fact an $\mathcal{A}$-$\mathcal{B}$ bimodule. Keeping the notation for the inner products induced on $\mathcal{X}$ by of our two preinner products, we also note that for all $x,y,z\in\mathcal{X}$ we have:
\begin{equation*}
\left<x,y\right>_{\mathcal{A}}z = x\left<y,z\right>_{\mathcal{B}}\text{.}
\end{equation*}

We also note that the range:
\begin{equation*}
\text{closure of the linear span of }\left\{ \left<x,y\right>_{\mathcal{A}} : x,y \in \mathcal{X} \right\}
\end{equation*}
of $\left<\cdot,\cdot\right>_{\mathcal{X}}$ is the closure of the linear span of $\{\left<x,y\right>_{\mathcal{A}} : y\in \mathcal{X}_{\mathrm{pre}}\}$ by construction. Yet, the latter is dense in $\mathcal{A}$. 

Indeed, let $b \in \mathcal{A}_{\mathrm{pre}}$. There exists $(b_n)_{n\in\N} \in A_\infty$ such that $\pi_{\mathcal{A}}\left((b_n)_{n\in\N}\right) = b$. Thus there exists $N \in\N$ such that for all $n\geq N$ we have $\varphi_{N,n}(b_N) = b_n$. Without loss of generality, we may assume $b_n = 0$ for $n < N$. 

Let $\varepsilon > 0$. Now, since $X_N$ is a full $\mathcal{A}$-left Hilbert module,  there exists $x_N^1,y_N^1,\ldots,x_N^m,y_N^m \in X_N$ and $\lambda_1, \ldots, \lambda_m \in\mathbb{C}$ for some $m\in\N$ such that:
\begin{equation*}
\left\| b_N - \sum_{j=1}^m \lambda_j\left<x_N^j,y_N^j\right>_{A_N} \right\|\leq\varepsilon\text{.}
\end{equation*}
Fix $j\in\{1,\ldots,m\}$. We set $x_n^j = 0$ for all $n < N$, and we then define $x_n^j = i_{N,n}(x_N^j)$. Thus by construction, $(x^j_n)_{n\in\N}\in X_\infty$. We construct $y^j = (Y_n^j)_{n\in\N} \in X_\infty$ similarly. Then, for all $n \geq N$, we have, using Definition (\ref{directed-system-modules-def}),:
\begin{equation*}
\begin{split}
\left\| b_n - \sum_{j=1}^m \lambda_j\left<x_n^j,y_n^j\right>_{A_N} \right\| &= \left\| \varphi_{N,n}(b_N) - \sum_{j=1}^m \lambda_j \left<i_{N,n}(x_N^j),i_{N,n}(y_N^j)\right>_{A_N} \right\|\\
&=\left\| \varphi_{N,n}\left(b_N - \sum_{j=1}^m \lambda_j \left<x_{N}^j,y_{N}^j\right>_{A_N}\right) \right\|_{\mathcal{A}}\\
&\leq\varepsilon\text{.}
\end{split}
\end{equation*}

Thus $(\mathcal{X},\left<\cdot,\cdot\right>_{\mathcal{A}})$ is a full left Hilbert $\mathcal{A}$-module. The same reasoning applies to the right Hilbert $\mathcal{B}$-module structure on $\mathcal{X}$.

Following the same approach as used in Equation (\ref{compatible-eq}), we can thus conclude that all the properties in \cite[Definition 6.10]{Rie4} are met by the bimodule $\mathcal{X}$ over $\mathcal{A}$ and $\mathcal{B}$, with the inner products $\left<\cdot,\cdot\right>_{\mathcal{A}}$ and $\left<\cdot,\cdot\right>_{\mathcal{B}}$.
\end{proof}

\section{The explicit construction of equivalence bimodules at each stage}

Fix an irrational number $\theta$ between $0$ and $1,$ and let 
 $$\alpha=\left(\theta+1, \frac{\theta+1}{p},\cdots, \frac{\theta+1}{p^j}=\alpha_j,\cdots \right).$$  We recall that the noncommutative solenoid $C^{\ast}(\Gamma, \Psi_{\alpha})$ can be viewed as the direct limit of the irrational rotation algebras $\left(A_{\alpha_{2j}}\right)_{j\in\N}.$  We know from results in \cite{LP2} that 
$C^{\ast}(\Gamma, \Psi_{\alpha})$ is strongly Morita equivalent to $C^{\ast}(\Gamma, \Psi_{\beta})$ where 
$$\beta=\left(1-\frac{\theta+1}{\theta},1-\frac{\theta+1}{p\theta}, \cdots, 1-\frac{\theta+1}{p^j\theta}=\beta_j, \cdots \right).$$
Since $C^{\ast}(\Gamma, \Psi_{\beta})$ can be expressed as a direct limit of the $C^{\ast}$-algebras $\left(A_{\beta_{2j}}\right)_{j\in\N},$ we want to analyze the Morita equivalence at each stage more carefully.

The following lemma includes formulas that will prove very useful formulas to us.
 \begin{lemma}
 Let $\alpha_{j}=\frac{\theta+1}{p^{j}}$ and $\beta_{j}=1-\frac{\theta+1}{p^{j}\theta}$ for all $j\in\N$.  Then for every $j\in\mathbb{N}$, the irrational rotation algebra $A_{\alpha_{2j}}$ is strongly Morita equivalent to $A_{\beta_{2j}}.$
 \end{lemma}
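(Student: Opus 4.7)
The plan is to reduce this lemma to Rieffel's classical theorem (from his paper on irrational rotation algebras) that two irrational rotation $C^\ast$-algebras $A_\mu$ and $A_{\mu'}$ are strongly Morita equivalent whenever $\mu'$ is obtained from $\mu$ by the fractional linear action of a matrix in $SL(2,\mathbb Z)$. Thus the entire content of the proof is to exhibit the correct matrix relating $\alpha_{2j}$ and $\beta_{2j}$.

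First I would set $\mu = \alpha_{2j} = (\theta+1)/p^{2j}$ and solve for $\theta$ in terms of $\mu$, giving $\theta = p^{2j}\mu - 1$. Substituting this expression into the definition
\[
\beta_{2j} \;=\; 1 - \frac{\theta+1}{p^{2j}\theta}
\]
and simplifying yields the identity
\[
\beta_{2j} \;=\; \frac{(p^{2j}-1)\alpha_{2j} - 1}{p^{2j}\alpha_{2j} - 1}\text{.}
\]
This displays $\beta_{2j}$ as the image of $\alpha_{2j}$ under the M\"obius action of the integer matrix
\[
M_j \;=\; \begin{pmatrix} p^{2j}-1 & -1 \\ p^{2j} & -1 \end{pmatrix}\text{,}
\]
whose determinant is $-(p^{2j}-1) + p^{2j} = 1$, so that $M_j \in SL(2,\mathbb{Z})$.

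Having produced this $SL(2,\mathbb Z)$ matrix, I would invoke Rieffel's theorem to conclude that $A_{\alpha_{2j}}$ and $A_{\beta_{2j}}$ are strongly Morita equivalent. Concretely, the equivalence bimodule in Rieffel's construction is obtained by completing $\mathcal{S}(\mathbb{R})$ (or $C_c(\mathbb{R})$) with respect to actions by translation and modulation coming from the lattice $\mathbb{Z} + \alpha_{2j}\mathbb{Z}$ in $\mathbb{R}\times\widehat{\mathbb R}$ and its dual lattice; the matrix $M_j$ encodes the change of basis between these two lattices. I would mention this explicitly, since the later sections will need to identify these stagewise Heisenberg bimodules with subspaces of the bimodule $\overline{C_c(\mathbb{Q}_p\times\mathbb{R})}$ constructed for the full noncommutative solenoid.

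The main "obstacle" here is really just recognizing that the somewhat opaque formula for $\beta_{2j}$ is in fact the standard $SL(2,\mathbb Z)$-action; once the matrix $M_j$ is written down the rest is a direct appeal to a known theorem. No substantive analytic work is required at this stage, since the explicit description of the bimodule is deferred to the subsequent sections where it will be matched with the $p$-adic multiresolution construction.
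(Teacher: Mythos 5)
Your proof is correct and takes essentially the same approach as the paper: both invoke Rieffel's $GL(2,\mathbb{Z})$-orbit criterion for strong Morita equivalence of irrational rotation algebras and exhibit an explicit integer matrix relating $\alpha_{2j}$ and $\beta_{2j}$. The only (cosmetic) difference is that the paper first reduces $\beta_{2j}$ to $-\frac{\theta+1}{p^{2j}\theta}$ modulo $1$ and writes $\alpha_{2j}$ as the image of $\beta_{2j}$ under $\left(\begin{smallmatrix}1&0\\p^{2j}&1\end{smallmatrix}\right)$, whereas you absorb the mod-$1$ shift into your matrix $M_j$ and go in the inverse direction; the two matrices define the same M\"obius transformation up to sign and inversion.
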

 
 \begin{proof}
 We first note that 
 $$\forall k \in \mathbb{Z}\quad e^{2\pi ik(1-\frac{\theta+1}{p^j\theta})}=e^{-2\pi ik (\frac{\theta+1}{p^j\theta})}$$
 so that without loss of generality we can assume that $\beta_j=-(\frac{\theta+1}{p^j\theta})$ for every $j\in\mathbb{N}$.
 Recall from the work of M. Rieffel in \cite{Rie1}, explicated further in \cite{Rie2}, that $A_{\alpha}$ is strongly Morita equivalent to $A_{\beta}$ if and only if there exists a matrix  $\left(\begin{array}{rr}
 a&b\\
 c&d\end{array}\right)\in GL(2,\mathbb Z)$ such that 
\begin{equation*}
\alpha=\frac{a\beta+b}{c\beta+d}\text{ modulo $1.$}
\end{equation*}

In this case, if we take $a=d=1,\;b=0,$ and $c=p^{2^j},$ we have:
\begin{equation*}
\begin{split}
\frac{\beta_{2j}+0}{p^{2j}\beta_{2j}+1}=\frac{-\beta_{2j}}{-p^{2j}\beta_{2j}-1}
 &=\;(-1)\cdot[-\frac{\theta+1}{p^{2j}\theta}]\cdot \frac{1}{\frac{\theta+1}{\theta}-1}\\
 &=\;\frac{(\theta+1)}{p^{2j}\theta}\cdot \frac{1}{\frac{\theta+1}{\theta}-1}\\
 &=\;\frac{\theta+1}{p^{2j}\theta}\cdot [\frac{1}{\frac{\theta +1}{\theta}-1}]\cdot \frac{\theta}{\theta}\\
&=\;\frac{\theta+1}{p^{2j}}\cdot \frac{1}{\theta+1-\theta}=\frac{\theta+1}{p^{2j}}=\alpha_{2j}.
\end{split}
\end{equation*}
This concludes our proof.
 \end{proof}

We now discuss the construction of each equivalence bimodule between $A_{\alpha_{2j}}$ and $A_{\beta_{2j}}$ as defined by M. Rieffel in 
\cite{Rie2}.  The following is a direct result of Theorem 1.1 of \cite{Rie2}:

\begin{proposition}
\label{PropRieffelbim}Let $p$ be a prime number, $\theta\in[0,1])$ and for all $j\in\N$, let $\alpha_{j} = \frac{\theta+1}{p^{j}}$ and $\beta_{j}=1-\frac{\theta+1}{p^{j}\theta}$.

For $j\geq 0,$ let ${\bf F_{p^{2j}}}=\mathbb Z/p^{2j}\mathbb Z.$
 Let $G=\mathbb R\times {\bf F_{p^{2j}}},$  and consider the closed subgroups $$H=\{(n,[n]): n\in \mathbb Z\}$$
and $$K=\{(-n\theta, [n]): n\in \mathbb Z\}$$ of $G$, where $[\cdot]$ is the canonical surjection $\mathbb{Z}\twoheadrightarrow \mathbf{F}_{p^{2j}}$.  Then $A_{\alpha_{2j}}$ is $\ast$-isomorphic to $C(G/H)\rtimes K,$ and $A_{\beta_{2j}}$ is $\ast$-isomorphic to $C(G/K)\rtimes H$, where the actions are given by translation. Moreover, $C_C(G)$ can be equipped with a left $A_{\alpha_{2j}}$-module action and a left $A_{\alpha_{2j}}$-valued inner product, and a right $A_{\beta_{2j}}$-action and right $A_{\beta_{2j}}$-valued inner product in such a way that $C_C(G),$ suitably completed, becomes a $A_{\alpha_{2j}}-A_{\beta_{2j}}$ equivalence bimodule.
\end{proposition}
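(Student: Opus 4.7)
The plan is to read the statement as a direct application of Rieffel's Theorem~1.1 in \cite{Rie2}, which in our abelian setting asserts: whenever $H,K$ are closed subgroups of a locally compact abelian group $G$ with $H\cap K=\{0\}$ and $H+K$ dense in $G$, the transformation group $C^{\ast}$-algebras $C(G/H)\rtimes K$ and $C(G/K)\rtimes H$ are strongly Morita equivalent via a bimodule obtained as an appropriate completion of $C_c(G)$, with explicit inner products and actions supplied in \cite{Rie2}. The work therefore splits into (a) verifying the Rieffel hypotheses on our $(G,H,K)$, and (b) identifying each of the two crossed products with the asserted rotation algebra.

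For (a), $G=\mathbb{R}\times\mathbf{F}_{p^{2j}}$ is locally compact abelian, and $H$ and $K$ are each the image of $\mathbb{Z}$ under a continuous injective homomorphism with discrete image (the projection onto the first coordinate is proper on each), hence closed. An element of $H\cap K$ provides integers $n,m$ with $n=-m\theta$ and $[n]=[m]$ in $\mathbf{F}_{p^{2j}}$; the irrationality of $\theta$ forces $n=m=0$. For density of $H+K$, given $(y,[k])\in G$ and $\varepsilon>0$, writing $n+m=k+p^{2j}\ell$ forces $n-m\theta=k-m(1+\theta)+p^{2j}\ell$, so the problem reduces to density of $\{p^{2j}\ell-m(1+\theta):\ell,m\in\mathbb{Z}\}$ in $\mathbb{R}$, which holds because $(1+\theta)/p^{2j}$ is irrational.

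For (b), the key step is to describe $G/H$ explicitly. I would introduce $\Phi_H:G\to\mathbb{R}/p^{2j}\mathbb{Z}$ by $(x,[m])\mapsto x-\widetilde m\pmod{p^{2j}\mathbb{Z}}$, where $\widetilde m\in\mathbb{Z}$ is any integer lift of $[m]$: this is independent of the lift since changing $\widetilde m$ by $p^{2j}$ shifts the value by a multiple of $p^{2j}$, and a short direct computation shows $\ker\Phi_H=H$. Thus $\Phi_H$ descends to a topological group isomorphism $G/H\cong\mathbb{R}/p^{2j}\mathbb{Z}\cong\mathbb{T}$. Under this identification, translation by $(-n\theta,[n])\in K$ becomes translation by $-n(\theta+1)$ on $\mathbb{R}/p^{2j}\mathbb{Z}$, that is, rotation by $-n\alpha_{2j}$ on $\mathbb{T}$, so $C(G/H)\rtimes K\cong A_{-\alpha_{2j}}\cong A_{\alpha_{2j}}$ (the last isomorphism being the well-known one sending a canonical generator to its adjoint). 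The parallel map $\Phi_K:(x,[m])\mapsto x+\theta\widetilde m\pmod{p^{2j}\theta\mathbb{Z}}$ has kernel exactly $K$ and identifies $G/K$ with $\mathbb{R}/p^{2j}\theta\mathbb{Z}$; under it the $H$-action becomes rotation by $(1+\theta)/(p^{2j}\theta)=1-\beta_{2j}\equiv-\beta_{2j}\pmod 1$, yielding $C(G/K)\rtimes H\cong A_{-\beta_{2j}}\cong A_{\beta_{2j}}$.

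Combining (a) and (b), Rieffel's theorem produces the equivalence bimodule as a completion of $C_c(G)$, transported across the two isomorphisms above to yield the asserted $A_{\alpha_{2j}}$- and $A_{\beta_{2j}}$-valued inner products and module actions. The main obstacle is purely bookkeeping: the signs and the scaling factor $p^{2j}$ must be tracked consistently through both quotient maps so that the final rotation angles land on $\alpha_{2j}$ and $\beta_{2j}$ on the nose rather than on other representatives in their $GL(2,\mathbb{Z})$-orbits. The preceding lemma, which replaces $\beta_j$ by $-(\theta+1)/(p^j\theta)$ via $e^{2\pi ik\beta_j}=e^{-2\pi ik(\theta+1)/(p^j\theta)}$, performs exactly this normalization on the $\beta$-side and will be used at the final identification step.
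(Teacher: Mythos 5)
Your argument is correct and follows the same route as the paper: both proofs reduce the statement to Theorem~1.1 of \cite{Rie2}. The difference is one of emphasis. The paper's proof consists of a single parameter instantiation (in Rieffel's notation, $a=1$, $b=0$, $q=p^{2j}$, $p=1$, $\alpha=\beta_{2j}=-\tfrac{\theta+1}{p^{2j}\theta}$, $\gamma=-\theta$) and then devotes itself to recording the explicit left inner product, the module action, and the actions of the generators $U_{\alpha_{2j}}$, $V_{\alpha_{2j}}$ on $C_C(G)$, since those formulas are what Corollary~\ref{corimportant} and Theorem~\ref{ThmRieffelbim} actually consume; it does not re-verify that the crossed products are the asserted rotation algebras. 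You instead check the hypotheses ($H$, $K$ closed, $H\cap K=\{0\}$ by irrationality of $\theta$, $H+K$ dense by irrationality of $(1+\theta)/p^{2j}$) and identify $G/H\cong\mathbb{R}/p^{2j}\mathbb{Z}$ and $G/K\cong\mathbb{R}/p^{2j}\theta\mathbb{Z}$ explicitly, computing the induced rotation angles $-\alpha_{2j}$ and $-\beta_{2j}$; these computations are all correct, including the normalization $1-\beta_{2j}\equiv-\beta_{2j}\pmod 1$ that matches the paper's preceding lemma. Your version is more self-contained on the identification of the two crossed products; the paper's is shorter and supplies the concrete formulas needed in the sequel, which your proof leaves implicit in the phrase ``transported across the two isomorphisms.'' Neither gap is mathematical; only note that the general statement you quote (closed subgroups of an abelian group with trivial intersection and dense sum) is really the symmetric imprimitivity setup from which Rieffel's Theorem~1.1 is derived, rather than the literal statement of that theorem.
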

\begin{proof}

This follows from Theorem 1.1 of Rieffel's paper \cite{Rie2}, with (using the notation there) $a=1,\;b=0, q=p^{2j},p=1,\;\alpha=\beta_{2j}=-(\frac{\theta+1}{p^{2j}\theta}),$ and $\gamma= \frac{1}{p^{2j}\beta_{2j}+1}=-\theta.$ We write the left-$A_{\alpha_{2j}}$ action and inner products as they will be useful in the sequel.  We remark that our formula for the inner product is modified from Rieffel's because we use the inverse identification of $G/H$ with $\mathbb T$ from the one used in \cite{Rie2}.

For $g\in G$, the class of $g$ in $G/H$ is denoted by $\widetilde{g}$. For this computation, we also identify $K$ with $\mathbb{Z}$ via the map $n\mapsto(n,[n])$. For $F_1$ and $F_2$ in $C_C(G)=C_C(\mathbb R\times {\bf F_{p^{2j}}})$, and for all $(t,[m])\in G$ and $n\in \mathbb{Z}$, we have:
\begin{equation*}
\begin{split}
\langle F_1 &, F_2\rangle_{A_{\alpha_{2j}}} (\widetilde{(t, [m])}, n)\\
&=\sum_{\ell\in\mathbb Z}F_1(p^{2j}t-m-\ell, [-m-\ell])\overline{F_2(p^{2j}t-m-\ell+n\theta, [-m-\ell-n]))}\\
&=\sum_{\ell\in\mathbb Z}F_1(p^{2j}t-\ell, [-\ell])\overline{F_2(p^{2j}t-\ell+n\theta, [-\ell-n]))}.
\end{split}
\end{equation*}

Moreover, if $f\in C_C(G/H\times K)\subset A_{\alpha_{2j}}$ and $F\in C_C(G)=C_C(\mathbb R\times {\bf F_{p^{2j}}})$ we obtain:
$$(f\cdot F)(t, [m])=\sum_{n\in \mathbb Z}f(\widetilde{(t,[m])},n)F(t+n\theta, [m-n]).$$

We also note for future reference that for fixed $j\in \mathbb N,$ the generators $U_{\alpha_{2j}}$ and $V_{\alpha{2j}}$ in $C_C(G/H\times K)\subset A_{\alpha_{2j}}$ satisfying  
$$U_{\alpha_{2j}}V_{\alpha_{2j}}=e^{2\pi i \alpha_{2j}}V_{\alpha_{2j}}U_{\alpha_{2j}}$$
are given by 
$$U_{\alpha_{2j}}(\widetilde{(r, [k])},n)=\left\{\begin{array}{rr}
{0,}&\mbox{if}\ \;n\;\not=1,\\
{1,}&\mbox{if}\;n=1,
\end{array}\right.$$
and 
$$V_{\alpha_{2j}}(\widetilde{(t, [m])},n)=\left\{\begin{array}{rr}
{0,}&\mbox{if}\ \;n\;\not=0,\\
{e^{2\pi i (t-m)/p^{2j}},}&\mbox{if}\;n=0.
\end{array}\right.$$
One computes that the action of $U_{\alpha_{2j}}$ on $F\in C_C(G)$ is given by:
\begin{equation}
\label{RieffelgenactionU}
(U_{\alpha_{2j}}\cdot F)(t, [m])=F(t+\theta, [m-1]),
\end{equation}
and the action of $V_{\alpha_{2j}}$ on $F\in C_C(G)$ is given by:
\begin{equation}
\label{RieffelgenactionV}
(V_{\alpha_{2j}}\cdot F)(t, [m])=e^{2\pi i (t-m)/p^{2j}}F(t, [m]).
\end{equation}
This will be useful in the sequel.

\end{proof}

We give a corollary to the proposition that will help us in our identification of equivalence bimodules:

\begin{corollary}
\label{corimportant}
Let $G,\; H,$ and $K$ be as in Proposition \ref{PropRieffelbim}
Let $\phi\in C_C(\mathbb R),$ fix $[m],\;m'\in {\bf F_{p^{2j}}},$ and define $\phi\otimes \delta_{m'}\in C_C(G)$ by
$$\phi\otimes \delta_{m'}(r,[m])=\left\{\begin{array}{rr}
{0,}&\mbox{if}\ \;[m]\;\not=[m'],\\
{\phi(r),}&\mbox{if}\;[m]=[m'].
\end{array}\right.$$
Then for $[m_1],\;[m_2]\in  {\bf F_{p^{2j}}}$ and $\phi_1,\;\phi_2\in C_C(\mathbb R),$
$$\langle \phi_1\otimes \delta_{m_1},\phi_2\otimes \delta_{m_2} \rangle_{A_{\alpha_{2j}}}(\widetilde{(t, [m])}, n)=$$
$$\left\{\begin{array}{rr}
{0,}&\mbox{if}\ \;[n]\;\not=[m_1-m_2],\\
{\sum_{\ell\in\mathbb Z}\phi_1(p^{2j}t+m_1-\ell p^{2j})\overline{\phi_2(p^{2j}t+m_1-\ell p^{2j}+n\theta)},}&\mbox{if}\ \;[n]\;=\;[m_1-m_2].
\end{array}\right.$$  
\end{corollary}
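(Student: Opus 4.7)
The plan is to substitute the specific functions $\phi_1\otimes\delta_{m_1}$ and $\phi_2\otimes\delta_{m_2}$ directly into the inner product formula derived at the end of the proof of Proposition \ref{PropRieffelbim}, and then carefully track which integers $\ell$ in the summation actually contribute a nonzero term.

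More concretely, I would begin from
\begin{equation*}
\langle F_1, F_2\rangle_{A_{\alpha_{2j}}}(\widetilde{(t,[m])},n) = \sum_{\ell\in\mathbb{Z}} F_1(p^{2j}t - \ell,[-\ell])\overline{F_2(p^{2j}t-\ell+n\theta,[-\ell-n])}
\end{equation*}
with $F_1 = \phi_1\otimes\delta_{m_1}$ and $F_2 = \phi_2\otimes\delta_{m_2}$. By definition of $\phi\otimes\delta_{m'}$, the first factor vanishes unless $[-\ell] = [m_1]$, that is, $\ell \equiv -m_1 \pmod{p^{2j}}$, and the second factor vanishes unless $[-\ell - n] = [m_2]$, that is, $\ell \equiv -m_2 - n\pmod{p^{2j}}$. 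Both conditions can be satisfied simultaneously exactly when $[n] = [m_1 - m_2]$ in $\mathbf{F}_{p^{2j}}$, which immediately produces the zero case of the stated formula.

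In the nonzero case, I would parametrize the surviving indices by $\ell = -m_1 + \ell' p^{2j}$ with $\ell' \in \mathbb{Z}$, and substitute into the two surviving factors. Under this change of variables, $F_1(p^{2j}t - \ell,[-\ell])$ becomes $\phi_1(p^{2j}t + m_1 - \ell' p^{2j})$, and using $[-\ell - n] = [m_1 - n] = [m_2]$, the factor $F_2(p^{2j}t - \ell + n\theta,[-\ell-n])$ becomes $\phi_2(p^{2j}t + m_1 - \ell' p^{2j} + n\theta)$. Relabelling $\ell'$ back to $\ell$ yields exactly the displayed formula.

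This is genuinely a direct calculation rather than a theorem with an obstacle; the only place where care is needed is the bookkeeping on $\mathbf{F}_{p^{2j}}$-classes, i.e.\ making sure both congruence conditions on $\ell$ are compatible and that the resulting description $\ell \equiv -m_1\pmod{p^{2j}}$ is correctly parametrized as $\ell = -m_1 + \ell' p^{2j}$ so that the argument of $\phi_1$ picks up the additive correction $+m_1 - \ell p^{2j}$. No deeper machinery is required beyond the formula for $\langle\cdot,\cdot\rangle_{A_{\alpha_{2j}}}$ already established in Proposition \ref{PropRieffelbim}.
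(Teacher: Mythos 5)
Your proposal is correct and follows essentially the same route as the paper: both substitute $\phi_1\otimes\delta_{m_1}$ and $\phi_2\otimes\delta_{m_2}$ into the inner-product formula from Proposition \ref{PropRieffelbim}, observe that the two delta factors force $\ell\equiv -m_1 \pmod{p^{2j}}$ and $[-\ell-n]=[m_2]$, deduce that a nonzero contribution requires $[n]=[m_1-m_2]$, and then reparametrize the surviving indices as $\ell=-m_1+z p^{2j}$ to obtain the displayed sum. The bookkeeping you flag (compatibility of the two congruences and the sign in $p^{2j}t+m_1-\ell p^{2j}$) is exactly the content of the paper's argument, so nothing is missing.
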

\begin{proof}
By our formulas above, we have that 
$$\langle \phi_1\otimes \delta_{m_1},\phi_2\otimes \delta_{m_2} \rangle_{A_{\alpha_{2j}}}(\widetilde{(t, [m])}, n)$$
$$=\;\sum_{\ell\in\mathbb Z}\phi_1\otimes \delta_{m_1}(p^{2j}t-\ell), [-\ell])\overline{\phi_2\otimes \delta_{m_2}(p^{2j}t-\ell+n\theta, [-\ell-n])}$$
$$=\;\sum_{\ell\in\mathbb Z}\phi_1(p^{2j}t-\ell)\delta_{m_1}([-\ell])\overline{\phi_2(p^{2j}t-\ell+n\theta) }\delta_{m_2}([-\ell-n]).$$
We note $[-\ell]= [m_1]$ mod $p^{2j}$ only if $\ell=-m_1$ mod $p^{2j}$ so if and only if $\ell =-m_1+z p^{2j}$ for some $z\in\mathbb Z.$ Likewise,
$[-\ell-n]=[m_1+z p^{2j})-n] =[m_2]$ mod $p^{2j}$ if and only if 
$m_1-n=m_2$ mod $p^{2j},$ so to have any chance of a non-zero outcome we must have $n=m_1-m_2+xp^{2j}$ for some $x\in \mathbb Z.$
It follows that for $n=m_1-m_2+xp^{2j}$ where $x\in\mathbb Z$ we have:
$$\langle \phi_1\otimes \delta_{m_1},\phi_2\otimes \delta_{m_2} \rangle_{A_{\alpha_{2j}}}(\widetilde{(t, [m])}, n)$$
$$=\;\sum_{z\in\mathbb Z}\phi_1(p^{2j}t+m_1-z p^{2j}))\overline{\phi_2(p^{2j}t+m_1-z p^{2j}+n\theta)}\delta_{m_2}([m_1-n])$$
$$=\;\sum_{z\in\mathbb Z}\phi_1(p^{2j}t+m_1-z p^{2j})\overline{\phi_2(p^{2j}t+m_1-zp^{2j}+n\theta)}\delta_{m_2}([m_1-n]),$$
and this last sum is equal to 
$$=\;\sum_{z\in\mathbb Z}\phi_1(p^{2j}t+m_1- zp^{2j})\overline{\phi_2(p^{2j}t+m_1-zp^{2j}+n\theta)}$$ if 
$[n]=[(m_1-m_2)]$ mod $p^{2j}$ and is equal to $0$ if $[n]\not= [m_1-m_2]$ mod $p^{2j}.$

That is, we have:
\begin{equation*}
\langle \phi_1\otimes \delta_{m_1},\phi_2\otimes \delta_{m_2} \rangle_{A_{\alpha_{2j}}}(\widetilde{(t, [m])}, n) = 0
\end{equation*}
if $n\not=[m_1-m_2]\mod p^{2j}$ and
\begin{multline*}
\langle \phi_1\otimes \delta_{m_1},\phi_2\otimes \delta_{m_2} \rangle_{A_{\alpha_{2j}}}(\widetilde{(t, [m])}, n) =\\
{\sum_{z\in\mathbb Z}\phi_1(p^{2j}t+m_1- zp^{2j})\overline{\phi_2(p^{2j}t+m_1- zp^{2j}+n\theta)},}
\end{multline*}
if $n=[m_1-m_2]\mod p^{2j}$.
\end{proof}

>From the above corollary, we obtain the following Theorem, which we will use to identify our bimodules in the sequel:

\begin{theorem}
\label{ThmRieffelbim}
Let $\phi_1, \phi_2\;\in C_C(\mathbb R)$ have sufficient regularity; for example, suppose they are $C^{\infty}$ with compact support.  Fix $m_1,\;m_2\in\mathbb Z$ and $j\in \mathbb N\cup \{0\}.$ Then:
\begin{itemize}
\item if $n\not=[m_1-m_2]\mod p^{2j}$ then
\begin{equation*}
\langle \phi_1\otimes \delta_{m_1},\phi_2\otimes \delta_{m_2} \rangle_{A_{\alpha_{2j}}}(\widetilde{(t, [m])}, n) = 0
\end{equation*}
\item if $n=[m_1-m_2]\mod p^{2j}$ then:
\begin{multline*}
\langle \phi_1\otimes \delta_{m_1},\phi_2\otimes \delta_{m_2} \rangle_{A_{\alpha_{2j}}}(\widetilde{(t, [m])}, n) =\\
{\sum_{k_2\in\mathbb Z}\left(\frac{1}{p^{2j}}\int_{-\infty}^{\infty}\phi_1(u)\overline{\phi_2(u+n\theta)}e^{-2\pi i \frac{k_2(u-m_1)}{p^{2j}}}\,\mathrm{d}u\right) e^{2\pi i k_2t}.}
\end{multline*}
\end{itemize}
\end{theorem}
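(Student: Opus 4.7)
The first bullet is immediate: Corollary \ref{corimportant} already asserts that the inner product vanishes whenever $n \not\equiv m_1 - m_2 \pmod{p^{2j}}$, so nothing remains to be shown in that case.

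For the second bullet, the plan is to recognise the expression furnished by Corollary \ref{corimportant},
\begin{equation*}
\sum_{z\in\mathbb{Z}}\phi_1(p^{2j}t+m_1-zp^{2j})\,\overline{\phi_2(p^{2j}t+m_1-zp^{2j}+n\theta)},
\end{equation*}
as the Fourier series of a periodic function. Set $f(u) = \phi_1(u)\overline{\phi_2(u+n\theta)}$; since $\phi_1,\phi_2\in C_c^\infty(\mathbb{R})$, the function $f$ is also $C_c^\infty(\mathbb{R})$, so in particular Schwartz. Introduce the change of variable $s=p^{2j}t+m_1$; then the sum above becomes
\begin{equation*}
F(s) \;=\; \sum_{z\in\mathbb{Z}} f(s - z\,p^{2j}),
\end{equation*}
which is $C^\infty$ and $p^{2j}$-periodic in $s$ (absolute convergence and smoothness follow from the compact support of $f$).

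The next step is to compute the Fourier series of $F$ with respect to the period $p^{2j}$. Using the standard unwinding from the fundamental domain to the full real line, the $k_2$-th Fourier coefficient is
\begin{equation*}
c_{k_2} \;=\; \frac{1}{p^{2j}}\int_0^{p^{2j}} F(s)\,e^{-2\pi i k_2 s/p^{2j}}\,\mathrm{d}s \;=\; \frac{1}{p^{2j}}\int_{-\infty}^{\infty} f(u)\,e^{-2\pi i k_2 u/p^{2j}}\,\mathrm{d}u,
\end{equation*}
so that, by Poisson summation (justified in the pointwise sense thanks to the Schwartz regularity of $f$),
\begin{equation*}
F(s) \;=\; \sum_{k_2\in\mathbb{Z}} c_{k_2}\,e^{2\pi i k_2 s/p^{2j}}.
\end{equation*}

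Finally, reinsert $s = p^{2j}t + m_1$. The exponential factorises as
\begin{equation*}
e^{2\pi i k_2 s/p^{2j}} \;=\; e^{2\pi i k_2 t}\,e^{2\pi i k_2 m_1/p^{2j}},
\end{equation*}
and absorbing the phase $e^{2\pi i k_2 m_1/p^{2j}}$ into the integral by writing it as $\int f(u) e^{-2\pi i k_2(u-m_1)/p^{2j}}\,\mathrm{d}u$ yields exactly the claimed formula. The only thing to check with any care is that one may freely interchange the sum over $z$ with the passage to the Fourier series; this is standard given that $f\in C_c^\infty(\mathbb{R})$, so there is really no genuine obstacle in the argument — the content of the theorem is simply a re-expression of the periodisation sum of Corollary \ref{corimportant} via Poisson summation, which will be the form required for identifying these bimodules with the ones coming from the $p$-adic multiresolution structure in the sequel.
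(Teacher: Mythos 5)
Your proof is correct and follows essentially the same route as the paper: both arguments recognize the sum from Corollary \ref{corimportant} as a periodization, expand it as a Fourier series via Poisson summation, and recover the stated formula by the change of variables $u = p^{2j}t + m_1$ (the paper merely rescales to a period-$1$ periodization first, while you work directly with period $p^{2j}$, which is a cosmetic difference). Nothing is missing.
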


\begin{proof}
If $n\not=\;[m_1-m_2]\;\text{mod}\;p^{2j}$, the result is clear, so we concentrate on the case where $n\;=\;[m_1-m_2]\;\text{mod}\;p^{2j}.$

When $\phi_1$ and $\phi_2$ have sufficient regularity and rapid decay as described in the statement of the theorem, and if $n=m_1-m_2+xp^{2j}$ for some $x\in\mathbb Z,$ then
a quick use of the Poisson summation formula shows that: 
\begin{equation*}
\begin{split}
&\quad \sum_{z\in\mathbb Z}\phi_1(p^{2j}t+m_1- zp^{2j})\overline{\phi_2(p^{2j}t+m_1- zp^{2j}+n\theta)}\\
&= \sum_{z\in\mathbb Z}\phi_1(p^{2j}(t-z+\frac{m_1}{p^{2j}}))\overline{\phi_2(p^{2j}(t-z +\frac{m_1+n\theta}{p^{2j}}))}\\
&=\sum_{k_2\in\mathbb Z}\left(\int_{-\infty}^{\infty}\phi_1(p^{2j}(y+\frac{m_1}{p^{2j}}))\overline{\phi_2(p^{2j}(y+\frac{m_1+n\theta}{p^{2j}}))}e^{-2\pi i k_2y} \, \mathrm{d} y\right) e^{2\pi i k_2t}
\end{split}
\end{equation*} 
Now let $u=p^{2j}y+m_1.$ Then $\frac{1}{p^{2j}}=dy$ and $\frac{u-m_1}{p^{2j}}=y.$  We therefore obtain the above expression equal to:
\begin{multline*}
\sum_{k_2\in\mathbb Z}\left(\int_{-\infty}^{\infty}\phi_1(p^{2j}(y+\frac{m_1}{p^{2j}}))\overline{\phi_2(p^{2j}(y+\frac{m_1+n\theta}{p^{2j}}))}e^{-2\pi i k_2y} \, \mathrm{d} y\right) e^{2\pi i k_2t} \\
=\sum_{k_2\in\mathbb Z}\left(\frac{1}{p^{2j}}\int_{-\infty}^{\infty}\phi_1(u)\overline{\phi_2(u+n\theta)}e^{-2\pi i \frac{k_2(u-m_1)}{p^{2j}}}\,\mathrm{d}u\right) e^{2\pi i k_2t},
\end{multline*}
as desired. 
\end{proof}

\section{Forming projective modules over noncommutative solenoids using $p$-adic fields }

In \cite{LP2} it was shown that equivalence bimodules between  $C^{\ast}(\Gamma,\Psi_{\alpha})$ and other noncommutative solenoids could be built by using the Heisenberg bimodule construction of M. Rieffel \cite{Rie3}.  We did this by embedding $\Qadic{1}{p}$ as a co-compact `lattice' in the larger (self-dual) group $M=[\mathbb Q_p\times \mathbb R],$ and the quotient group $M/\Gamma$ was  exactly the solenoid ${\mathcal S}_p.$  We review this construction for what follows.

First we discuss the structure of the $p$-adic field $\mathbb{Q}_p,$ which is a locally compact abelian group under addition. Recall that for $p$ prime, the field of $p$-adic numbers $\mathbb{Q}_p$ is the completion of  the rationals $\mathbb{Q}$ for the distance induced by the $p$-adic absolute value:
\begin{equation*}
\forall x \in \mathbb{Q} \setminus\{0\}\quad |x|_p = p^{-n} \text{ if $x=p^n\left(\frac{a}{b}\right)$ with $a$ and $b$ relatively prime with $p$}\text{,} 
\end{equation*}
and $|0|_p = 0$. It can be shown that any element of $\mathbb{Q}_p$ can be written as:
\begin{equation*}
\sum_{i=k}^{+\infty}a_ip^i,\;a_i\in\{0,1,\cdots, p-1\},
\end{equation*}
for some $k\in \mathbb{Z}$, where the series is convergent for the $p$-absolute value $|\cdot|_p$.

The group $\mathbb{Z}_p$ of $p$-adic integers sits inside $\mathbb{Q}_p$ as a closed compact subgroup,  consisting of those $p$-adic numbers of the form $\sum_{k=0}^\infty a_i p^i$ with $a_i\in\{0,\ldots,p-1\}$ for all $i\in\N$.

The quotient of $\mathbb{Q}_p$ by $\mathbb{Z}_p$ is the Pr\"ufer $p$-group, consisting of all $p^{nth}$-roots of unity.

The group $\mathbb{Q}_p$ is self-dual: for any character $\chi$ of $\mathbb{Q}_p$, there exists a unique $x\in\mathbb{Q}_p$ such that:
\begin{equation*}
\chi = \chi_x : q \in \mathbb{Q}_p \longmapsto e^{2i\pi \{ x\cdot q \}_p} \in \mathbb{T}
\end{equation*}
where $\{x\cdot q\}_p$ is the fractional part of the product $x\cdot q$ in $\mathbb{Q}_p,$ i.e. it is the sum of the terms involving the negative powers of $p$  in the $p$-adic expansion of $x\cdot q.$  

Similarly, for every character $\chi$ of $\mathbb{R}$, there exists some unique $r\in \mathbb{R}$ such that:
\begin{equation*}
\chi = \chi_r: x \in \mathbb{R}\longmapsto e^{2i\pi r x} \in \mathbb{T}
\end{equation*}

Therefore, for any character $\chi$ of $M$, there exists some unique pair $(x,r)\in \mathbb{Q}_p\times \mathbb{R}=M,$ such that:
\begin{equation*}
\chi = \chi_{(x,r)} : (q,t)\in M \longmapsto \chi_x(q)\chi_r(t)\text{.}
\end{equation*}
It is possible to check that the map $(x,r)\in M\mapsto \chi_{(x,r)}$ is a group isomorphism between $M$ and $\hat{M},$ so that $M=\mathbb{Q}_p\times \mathbb{R}$ is indeed self-dual.

As before, let $\Gamma=\left(\Qadic{1}{p}\right)^2$, and now let $M=[\mathbb{Q}_p\times \mathbb{R}]$.    Let  $\iota:\Gamma\rightarrow M\times \hat{M}\cong M\times M$ be any embedding of $\Gamma$ into $M\times M$ as a cocompact subgroup. Let the image $\iota(\Gamma)$ be denoted by $D.$ Then $D$ is a discrete co-compact subgroup of $M\times \hat{M}.$  Rieffel defined the {\bf Heisenberg multiplier} $\eta:(M\times \hat{M})\times (M\times \hat{M})\to \mathbb T$  by:
$$\eta((m,s),(n,t))=\langle m, t\rangle,\; (m,s), (n,t)\in   M\times \hat{M}.$$  
Following Rieffel, the  {\bf symmetrized version} of $\eta$ is denoted by the letter $\rho,$ and is the multiplier defined by:
\begin{equation*}
\rho((m,s),(n,t))=\eta((m,s),(n,t))\overline{\eta((n,t),(m,s))},\; (m,s), (n,t)\in   M\times \hat{M}.
\end{equation*}

We recall the following result of Rieffel (\cite{Rie3}), specialized to our noncommutative solenoids to provide the main examples in \cite{LP2}:

\begin{theorem} (Rieffel, \cite{Rie2}, Theorem 2.12, L.-P., \cite{LP2}, Theorem 5.6)
Let $M,\;D,\; \eta,$ and $\rho$ be as above.  Then $C_C(M)$ can be given the structure of a left-$C_C(D,\eta)$ module.  Moreover, suitably completed with respect to the norm determined by the inner product, $\overline{C_C(M)}$ can be made into a 
$C^{\ast}(D,\eta)-C^{\ast}(D^{\perp},\overline{\eta})$ Morita equivalence bimodule, where 
\begin{equation*}
D^{\perp}=\{(n,t)\in M\times \hat{M}: \rho((m,s),(n,t))=1\;\forall (m,s)\in D\}.
\end{equation*}
\end{theorem}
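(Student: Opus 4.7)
The plan is to follow Rieffel's original Heisenberg bimodule construction from \cite{Rie3}, specialized to the self-dual locally compact abelian group $M=\mathbb{Q}_p\times\mathbb{R}$ and the discrete cocompact subgroup $D\subset M\times\hat M$. First I would introduce the explicit formulas for the left action and left inner product. For $F\in C_C(D,\eta)$ and $f\in C_C(M)$, set
\begin{equation*}
(F\cdot f)(m)\;=\;\sum_{(n,\chi)\in D}F(n,\chi)\,\chi(m)\,f(m-n),
\end{equation*}
and for $f,g\in C_C(M)$ set
\begin{equation*}
\langle f,g\rangle_{D}(n,\chi)\;=\;\int_M f(m)\,\overline{\chi(m)}\,\overline{g(m-n)}\,dm.
\end{equation*}
Because $D$ is discrete, only finitely many terms of the sum contribute on any compact subset of $M$; because $f,g$ have compact support, the integral converges. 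A direct computation using the specific form $\eta((m,s),(n,t))=\langle m,t\rangle$ of the Heisenberg cocycle then shows that the action is associative with respect to the twisted convolution on $C_C(D,\eta)$, and that $\langle F\cdot f,g\rangle_{D}=F\cdot\langle f,g\rangle_{D}$.

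Next I would construct an analogous right $C_C(D^{\perp},\bar\eta)$-module structure, together with a right inner product $\langle f,g\rangle_{D^{\perp}}$, using the fact that $D^{\perp}$ is itself a discrete cocompact subgroup of $M\times\hat M$, a consequence of the self-duality of $M$ and the cocompactness of $D$. The critical compatibility condition
\begin{equation*}
\langle f,g\rangle_{D}\cdot h\;=\;f\cdot\langle g,h\rangle_{D^{\perp}}
\end{equation*}
is essentially a Poisson summation identity: the sum over $D$ appearing on the left can be rewritten as a sum over the dual lattice $D^{\perp}$, with the twist by $\rho$ playing the role of a Fourier kernel. Verifying this identity term-by-term, after plugging in the definitions, is the main algebraic content of the construction.

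The main obstacle I anticipate is verifying positivity and fullness of both inner products, since these are required before one can interpret the completion of $C_C(M)$ as a genuine equivalence bimodule in the sense of \cite[Definition 6.10]{Rie4}. Positivity would follow from showing that extending the action above yields a covariant representation of $C^{\ast}(D,\eta)$ on $L^2(M)$, of Schr\"odinger type, whose existence depends essentially on the self-duality of $M$. Fullness requires choosing $f,g\in C_C(M)$ supported in a sufficiently small fundamental domain for the translation action of the first coordinate of $D$ so that $\langle f,g\rangle_{D}$ collapses to a single nonzero term, and then approximating any given element of $C_C(D,\eta)$ by finite linear combinations of such inner products; the analogous argument with $D^{\perp}$ gives fullness on the right. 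Once positivity, fullness, and boundedness of the induced $C^{\ast}$-norms on $C_C(M)$ are in hand, completing $C_C(M)$ for either inner product yields the desired $C^{\ast}(D,\eta)$-$C^{\ast}(D^{\perp},\bar\eta)$ equivalence bimodule.
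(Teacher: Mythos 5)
You should first note a point of comparison: the paper does not prove this statement at all --- it is imported verbatim as a known result, namely Theorem 2.12 of \cite{Rie3} together with Theorem 5.6 of \cite{LP2} --- so your proposal is really being measured against Rieffel's original argument rather than anything in this text. Your outline does capture the skeleton of that argument: the Heisenberg projective representation of $M\times\hat{M}$ on $L^2(M)$, restriction to the lattice $D$ and to its annihilator $D^{\perp}$ (discrete and cocompact by self-duality), and a Poisson summation identity underlying the associativity condition $\langle f,g\rangle_{D}\cdot h = f\cdot\langle g,h\rangle_{D^{\perp}}$. One small but real slip: with the action $(F\cdot f)(m)=\sum F(n,\chi)\chi(m)f(m-n)$, composing two such operators produces the cocycle $\overline{\langle n_1,\chi_2\rangle}=\overline{\eta}$ rather than $\eta$; to match the Heisenberg multiplier as defined in the paper you need $f(m+n)$ (or conjugate the character).

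The genuine gap is in your fullness argument. You propose to take $f,g$ supported in a small fundamental domain so that $\langle f,g\rangle_{D}$ ``collapses to a single nonzero term.'' This cannot work: because $D$ is cocompact in $M\times\hat{M}$, the subgroup $D\cap(\{0\}\times\hat{M})$ of elements with trivial translation part is infinite (for the $D_\theta$ of this paper it contains $\left((0,0),(\iota(r),r)\right)$ for every $r\in\mathbb{Z}[1/p]$), and each such $(0,\chi)$ contributes $\int_M f\,\overline{\chi}\,\overline{g}$ to the inner product no matter how small the supports are, so the inner product is never supported at a single point of $D$ and finitely supported functions on $D$ are not obviously in its range. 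A repair in the spirit of Rieffel's proof is structural rather than constructive: the closed linear span of the range of $\langle\cdot,\cdot\rangle_{D}$ is a two-sided ideal of $C^{\ast}(D,\eta)$, nonzero because $\langle f,f\rangle_{D}(0,1)=\|f\|_{L^2}^2$, and invariant under the dual action of $\hat{D}$, since translating $f$ by the full Heisenberg representation of $M\times\hat{M}$ implements that action on inner products; the only dual-invariant ideals of a twisted group $C^{\ast}$-algebra of a discrete group are $0$ and the whole algebra, whence fullness. Relatedly, positivity does not follow merely from the existence of a covariant representation on $L^2(M)$: one needs the Janssen-type identity, again a Poisson summation, expressing $(\langle f,f\rangle_{D}\,g,g)_{L^2(M)}$ as a constant times $\sum_{d\in D^{\perp}}\left|(f,\pi'(d)g)\right|^2\geq 0$.
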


In order to construct explicit bimodules for our examples, we give a detailed formula for $\eta$ in our case. 

\begin{definition} The Heisenberg multiplier $\eta: [\mathbb{Q}_p\times \mathbb{R}]^2\times [\mathbb{Q}_p\times \mathbb R]^2\to \mathbb{T}$ is defined by 
\begin{equation*}
\eta[((q_1,r_1),(q_2,r_2)),((q_3,r_3),(q_4,r_4))]= e^{2\pi i r_1r_4}e^{2\pi i \{q_1q_4\}_p},
\end{equation*}
where $\{q_1q_4\}_p$ is the fractional part of the product $q_1\cdot q_4,$ i.e. the sum of the terms involving the negative powers of $p$  in the $p$-adic expansion of $q_1q_4.$
\end{definition}

For $\theta\in \mathbb R,\;\theta\not=0,$ we define $\iota_{\theta}:\left(\Qadic{1}{p}\right)^2\to [\mathbb Q_p\times \mathbb R]^2$ by 
\begin{equation*}
\iota_{\theta}(r_1, r_2)=[(\iota(r_1), \theta\cdot r_1), (\iota(r_2), r_2)],
\end{equation*}
where $\iota: \Qadic{1}{p}\to \mathbb Q_p$ is the natural embedding, i.e. if $r=\frac{k}{p^j}\geq 0,$ so that we can write $r=\sum_{j=M}^N\frac{a_i}{p_i}$ for with integers $M,N$ such that $-\infty<M\leq N<\infty,$ and $a_i\in \{0,1,\cdots, p-1\},$ then 
$$\iota(\sum_{j=M}^Na_jp^j)\;=\;\sum_{j=M}^Na_jp^j,$$ and
$$\iota(-\sum_{j=M}^Na_jp^j)\;=\;-[\iota(\sum_{j=M}^Na_jp^j)]\;=\;(p-a_M)p^M+\sum_{j=M+1}^N(p-1-a_j)p^j+\sum_{j=N+1}^{\infty}(p-1)p^j.$$
For example, $\iota(-1)\;=-\iota(1)\;=\;\sum_{j=0}^{\infty}(p-1)p^j.$  When there is no danger of confusion, for example if $r=\frac{a}{p^k}$ where $a\in \{0,1,\cdots, p-1\},$ we sometimes use $\frac{a}{p^k}$ instead of $\iota(\frac{a}{p^k}).$
\newline Then   
\begin{equation*}
\begin{split}
\eta(\iota_{\theta}(r_1,r_2)),\iota_{\theta}(r_3,r_4)) &=e^{2\pi i \{\iota(r_1)\iota_1(r_4)\}_p}e^{2\pi i \theta r_1r_4}\\
&=e^{2\pi i r_1r_4}e^{2\pi i \theta r_1r_4}=e^{2\pi i (\theta+1)r_1r_4}.
\end{split}
\end{equation*}

(Here we used the fact that for $r_i, r_j\in \Qadic{1}{p},\;\{\iota(r_i)\iota(r_j)\}_p\equiv r_ir_j$ modulo $\mathbb{Z}.$)

One checks that  setting $D_{\theta}=\iota_{\theta}\left(\Qadic{1}{p}\right)^2,$ the $C^{\ast}$-algebra $C^{\ast}(D_{\theta},\eta)$ is exactly the same as the noncommutative solenoid $C^{\ast}(\Gamma, \alpha),$ for 
\begin{equation*}
\alpha=\left(\theta+1, \frac{\theta+1}{p},\cdots, \frac{\theta+1}{p^n},\cdots \right),
\end{equation*}
i.e. $\alpha_n=\frac{\theta+1}{p^n}$ for all $n\in\N$. 

For this particular embedding of $\left(\Qadic{1}{p}\right)^2$ as the discrete subgroup $D$ inside $M\times \hat{M},$ we calculate that  
\begin{equation*}
D_{\theta}^{\perp}=\left\{\left(\iota(r_1),-\frac{r_1}{\theta}\right),\left(\iota(r_2),-r_2\right):\;r_1,\;r_2\in \Qadic{1}{p}\right\}.
\end{equation*}
Moreover,
\begin{equation*}
\overline{\eta}([(\iota(r_1),-\frac{r_1}{\theta}),(\iota(r_2),-r_2)],[(\iota(r_3),-\frac{r_3}{\theta}),(\iota(r_4),-r_4)])=e^{-2\pi i(\frac{1}{\theta}+1)r_1r_4}.
\end{equation*}

It is evident that $C^{\ast}(D_{\theta}^{\perp},\eta)$ is also a non-commutative solenoid $C^{\ast}(\Gamma, \beta)$ where $\beta_n=1-\frac{\theta+1}{p^n\theta},$ and an application of Theorem 5.6 of \cite{LP2} shows that $C^{\ast}(\Gamma, \alpha)$ and $C^{\ast}(\Gamma, \beta)$ are strongly Morita equivalent in this case.

Note that for 
\begin{equation*}
\alpha= (\alpha_j)_{j\in\N}=\left(\theta+1, \frac{\theta+1}{p},\cdots, \frac{\theta+1}{p^j},\cdots \right),
\end{equation*}
and 
\begin{equation*}
\beta=(\beta_j)_{j\in\N}=\left(1-\frac{\theta+1}{p^j\theta}\right)_{j\in\N},
\end{equation*}
 we have
\begin{equation*}
\theta \cdot \tau(K_0(C^{\ast}(\Gamma, \Psi_{\alpha}))=\tau(K_0(C^{\ast}(\Gamma, \Psi_{\beta})).
\end{equation*}

We now discuss this example in further detail and relate it to the strong Morita equivalence bimodules constructed in the previous sections. 

\begin{proposition}
\label{propriefcan}
Consider $C^{\ast}(D_{\theta},\eta)$ as defined above, where 
\begin{multline*}
D_{\theta}=\{((\iota(r_1), \theta\cdot r_1), (\iota(r_2), r_2)): r_1,\; r_2\in \Qadic{1}{p}\}\\
=\left\{\left(\left(\iota(\frac{j_1}{p^{k_1}}),\theta\cdot \frac{j_1}{p^{k_1}}\right), \left(\iota(\frac{j_2}{p^{k_2}}), \frac{j_2}{p^{k_2}}\right)\right):\;j_1,\;j_2\in\mathbb Z,\; k_1,\;k_2\in \mathbb N\cup\{0\}\right\}\\
\subset M\times \hat{M}.
\end{multline*}
For $\alpha$ as above, define 
\begin{equation*}
U_{\alpha,j}=\delta_{((\iota(\frac{1}{p^j}), \frac{\theta}{p^j}), (0,0))}\in\;C^{\ast}(D_{\theta},\eta)
\end{equation*}
and 
\begin{equation*}
V_{\alpha,j}=\delta_{((0,0),(\iota(\frac{1}{p^j}),\frac{1}{p^j}))}\in\;C^{\ast}(D_{\theta},\eta).
\end{equation*}
Then for all $j\geq 0,\;U_{\alpha,j}=(U_{\alpha,j+1})^p,\;V_{\alpha,j}=(V_{\alpha,j+1})^p,$ and 
\begin{equation*}
U_{\alpha,j}V_{\alpha,j}=e^{2\pi i \frac{\theta+1}{p^{2j}}}V_{\alpha,j}U_{\alpha,j}.
\end{equation*}

Therefore the algebra elements $U_{\alpha,j}$ and $V_{\alpha,j}$ correspond to the algebra elements $U_{\alpha_{2j}}$ and $V_{\alpha_{2j}}$ described in Section 3, when the noncommutative solenoid was shown to be a direct limit of rotation algebras.

\end{proposition}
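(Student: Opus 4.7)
The plan is to reduce everything to the multiplication law for Dirac functions in the twisted group C*-algebra, namely $\delta_a * \delta_b = \eta(a,b)\delta_{a+b}$ for $a,b \in D_\theta$. Each of the three identities then becomes a direct application of the explicit formula
\begin{equation*}
\eta\bigl(((q_1,r_1),(q_2,r_2)),((q_3,r_3),(q_4,r_4))\bigr) = e^{2\pi i r_1 r_4}\,e^{2\pi i\{q_1 q_4\}_p},
\end{equation*}
which depends only on the first coordinate pair of the first argument and the second coordinate pair of the second argument.

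To establish the two recursion formulas, I would first write $a=((\iota(1/p^{j+1}),\theta/p^{j+1}),(0,0))$. For every $k\in\{1,\ldots,p-1\}$ the element $ka$ still has second pair $(0,0)$, so the formula above gives $\eta(ka,a)=1$. Iterating the multiplication rule then produces $\delta_a^{\,p} = \delta_{pa}$, and since multiplication by $p$ in $\mathbb{Q}_p$ is simply a shift of the $p$-adic expansion, $pa = ((\iota(1/p^j),\theta/p^j),(0,0))$, yielding $(U_{\alpha,j+1})^p = U_{\alpha,j}$. The same argument, with $b=((0,0),(\iota(1/p^{j+1}),1/p^{j+1}))$ whose first pair is $(0,0)$, gives $\eta(kb,b)=1$ and hence $(V_{\alpha,j+1})^p = V_{\alpha,j}$.

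For the commutation relation, set $u=((\iota(1/p^j),\theta/p^j),(0,0))$ and $v=((0,0),(\iota(1/p^j),1/p^j))$. Direct substitution gives
\begin{equation*}
\eta(u,v) = e^{2\pi i(\theta/p^j)(1/p^j)}\,e^{2\pi i\{(1/p^j)(1/p^j)\}_p} = e^{2\pi i(\theta+1)/p^{2j}},
\end{equation*}
while $\eta(v,u)=1$ because the first pair of $v$ is $(0,0)$. Since $u+v=v+u$, comparing the two products of Dirac functions yields $U_{\alpha,j}V_{\alpha,j} = e^{2\pi i(\theta+1)/p^{2j}}\,V_{\alpha,j}U_{\alpha,j}$, which is precisely the defining relation of $A_{\alpha_{2j}}$. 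The identification with the generators of Section 3 then follows from the universal property of the rotation algebra together with the observation that the relations $(U_{\alpha,j+1})^p=U_{\alpha,j}$ and $(V_{\alpha,j+1})^p=V_{\alpha,j}$ are precisely what the connecting *-morphisms $\varphi_n$ prescribe in the direct limit presentation of $C^*(\Gamma,\Psi_\alpha)$.

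The only mildly subtle step, and the place I would pause to double-check, is the identity $\{(1/p^j)(1/p^j)\}_p = 1/p^{2j}$: this uses that $\iota(1/p^j)$ equals $p^{-j}$ in $\mathbb{Q}_p$, whose square $p^{-2j}$ has a $p$-adic expansion consisting of a single negative-power digit, so its fractional part equals itself. Apart from this small piece of $p$-adic bookkeeping, the proposition is a mechanical unfolding of the definitions and there is no genuine obstacle.
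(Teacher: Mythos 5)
Your proposal is correct and follows essentially the same route as the paper: the authors likewise compute $\eta(u,v)=e^{2\pi i(\theta+1)/p^{2j}}$ and $\eta(v,u)=1$ from the explicit formula and then invoke ``standard twisted group algebra calculations'' for all three identities, which are exactly the $\delta_a\ast\delta_b=\eta(a,b)\delta_{a+b}$ manipulations you spell out. Your telescoping argument for $(U_{\alpha,j+1})^p=U_{\alpha,j}$ and the $p$-adic fractional-part check are just the details the paper leaves implicit.
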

\begin{proof}
We calculate
$$\eta(((\iota(\frac{1}{p^j}), \frac{\theta}{p^j}),(0,0)), ((0,0),(\iota(\frac{1}{p^j}),\frac{1}{p^j}))=e^{2\pi i \frac{\theta+1}{p^{2j}}},$$
and
$$\eta( ((0,0),(\iota(\frac{1}{p^j}),\frac{1}{p^j})), ((\iota(\frac{1}{p^j}), \frac{\theta}{p^j}),(0,0)))=e^{2\pi i \cdot 0}=1.$$
The identity $$U_{\alpha,j}V_{\alpha,j}=e^{2\pi i \frac{\theta+1}{p^{2j}}}V_{\alpha,j}U_{\alpha,j}$$ then follows from standard twisted group algebra calculations, as do the other identities.

\end{proof}

\section{The Haar multiresolution analysis for $L^2(\mathbb Q_p)$ of Shelkovich and Skopina}

In the previous section, it was shown that if we wish to analyze the Heisenberg equivalence bimodule of M. Rieffel between the noncommutative solenoids 
the noncommutative solenoids $C^{\ast}(\Gamma, \Psi_{\alpha})$ and $C^{\ast}(\Gamma, \Psi_{\beta}),$ we need to study $\overline{C_C(\mathbb Q_p\times \mathbb R)},$ 
where the closure is taken in the norms induced by the inner products on either side.  It thus makes sense to consider the $L^2$ closure of $C_C(\mathbb Q_p),$ 
and consider a multiresolution structure for it generated by continuous, compactly supported functions on $\mathbb Q_p,$ which we will then tensor by $C_C(\mathbb R)$ 
to construct our nested sequence of equivalence bimodules in $\overline{C_C(\mathbb Q_p\times \mathbb R)}.$

We first recall the definition due to Shelkovich and Skopina \cite{ShSk} and studied further by Albeverio, Evdokimov and Skopina \cite{AES} of the Haar multiresolution analysis for dilation and translation in $L^2(\mathbb Q_p).$

\begin{definition}
A collection $\{{\mathcal V}_j\}_{j=-\infty}^{\infty}$ of closed subspaces of $L^2(\mathbb Q_p)$ is called a {\it multiresolution analysis} (MRA) for dilation by $p$ if:
\begin{enumerate}
\item ${\mathcal V}_j\subset {\mathcal V}_{j+1}$ for all $j\in\mathbb Z;$
\item $\cup_{j\in\mathbb Z}{\mathcal V}_j$ is dense in $L^2(\mathbb Q_p);$
\item $\cap_{j\in\mathbb Z}{\mathcal V}_j=\{0\};$
\item $f\in {\mathcal V}_j$ if and only if $f(p^{-1}(\cdot))\in {\mathcal V}_{j+1};$
\item There exists a ``scaling function'' $\phi\in {\mathcal V}_0$ such that if we set:
\begin{equation*}
\mathcal{I}_p=\left\{a\in\mathbb{Q}_p: \{a\}_p=0\right\},
\end{equation*}
then
\begin{equation*}
{\mathcal V}_0=\overline{\text{span}}\{\phi(q-a): a\in {\mathcal I}_p\},
\end{equation*}
where $\mathbb{Z}_p$ is the compact open ring of integers in the $p$-adic field $\mathbb Q_p,$. Note that the set $\mathcal{I}_p$ gives a natural family of coset representatives for $\bigslant{\mathbb{Q}_p}{\mathbb{Z}_p},$ but is not a group.
\end{enumerate}
(Note that lacking the appropriate lattice in $\mathbb Q_p,$ it is necessary to use the coset representatives ${\mathcal I}_p$ 
to form the analog of shift-invariant subspaces.)
\end{definition}

Using the $p$-adic Haar wavelet basis of S. Kozyrev (2002), Shelkovich and Skopina in 2009 constructed the following closed subspaces 
$\{{\mathcal V_j}: j\in\mathbb Z\}$ of $L^2(\mathbb Q_p),$  which are called a $p$-adic Haar MRA:
$${\mathcal V}_j=\overline{\text{span}}\{p^{j/2}\chi_{[\mathbb Z_p]}(p^{-j}\cdot -n):\;n\in \mathbb Q_p/\mathbb Z_p\},\;j\in\mathbb Z.$$
  The scaling function $\phi$ in this case was shown to be $\phi=\chi_{\mathbb Z_p}.$  Note that unlike the scaling functions in $L^2(\mathbb R),$ the scaling functions for MRA's in $L^2(\mathbb Q_p)$ are $\mathbb Z$-periodic, in general (\cite{ShSk}, \cite{AES}).

The key refinement equation for the scaling function in the Haar multiresolution analysis for $L^2(\mathbb{Q}_p)$ is:
\begin{equation*}
\chi_{[\mathbb Z_p]}(q)=\sum_{n=0}^{p-1}\chi_{[\mathbb Z_p]}\left(p^{-1}q-\frac{n}{p}\right),
\end{equation*}
and in fact one can show
\begin{equation*}
\chi_{[\mathbb Z_p]}(q)=\sum_{n=0}^{p^j-1}\chi_{[\mathbb Z_p]}\left(p^{-j}q-\frac{n}{p^j}\right),\;\forall j\geq 0.
\end{equation*}

These identities are key in some of our calculations that follow.

We now slightly modify the definition of MRA for $L^2(\mathbb Q_p),$ to obtain a definition that will be more useful in the construction of projective modules over noncommutative solenoids.

\begin{definition}
A collection $\{\widetilde{{\mathcal V}}_j\}_{j=0}^{\infty}$ of closed subspaces of $L^2(\mathbb Q_p)$ is called a {\it multiresolution structure} (MRS) for dilation by $p$ if:
\begin{enumerate}
\item $\widetilde{{\mathcal V}}_j\subset \widetilde{{\mathcal V}}_{j+1}$ for all $j \geq 0;$
\item $\cup_{j\in\mathbb Z}\widetilde{{\mathcal V}}_j$ is dense in $L^2(\mathbb Q_p);$
\item If $f\in \widetilde{{\mathcal V}}_j,$ then $f(p^{-1}(\cdot))\in \widetilde{{\mathcal V}}_{j+1};$
\item There exists a ``scaling function'' $\phi\in \widetilde{{\mathcal V}}_0$ such that 
\begin{equation*}
\widetilde{{\mathcal V}}_j=\overline{\mathrm{span}}\left\{\phi\left(p^{-j}q-\iota(a)\right): \iota(a)\in \iota(\frac{1}{p^{2j}}\mathbb{Z})\subset \iota(\mathbb Z[\frac{1}{p}])\subset \mathbb Q_p\right\}.
\end{equation*}
\end{enumerate}
Condition (4) in particular says that each $\widetilde{{\mathcal V}}_j$ is invariant under translation by $\iota(\frac{1}{p^{2j}}\mathbb Z).$  Thus if ${\mathcal V}_0$ is finite dimensional, i.e. if the translates of $\phi$ by $\iota(\mathbb Z)$ repeat after a finite number of steps, then each $\widetilde{{\mathbb V}}_j$ will be finite dimensional, as well.
\end{definition}

It is evident that by taking the scaling function involved, every multiresolution structure for $L^2(\mathbb Q_p)$ gives rise a sequence of subspaces that satisfy all the conditions of multiresolution analysis for $L^2(\mathbb Q_p)$ save for the conditions of having the subspaces with negative indices which intersect to the zero subspace.  In the case of the Haar multiresolution analysis, we can in fact show that its nonnegative subspaces can be constructed from a multiresolution structure.  We do this in the next example.  
\begin{example}
\label{exHaarMRS}
We choose as our scaling function the Haar scaling function $\phi(q)=\chi_{[\mathbb Z_p]}(q)\;\in L^2(\mathbb Q).$  Let $\widetilde{{\mathcal V}}_0$ be the one dimensional subspace generated by $\phi.$ Note $\phi$ is $\mathbb Z$-periodic so that $\widetilde{{\mathcal V}}_0=\overline{\text{span}}\{\phi(q-\iota(a)): a\in \mathbb Z\},$ and condition (4) of the definition is satisfied for $j=0.$  Note the refinement equation  $\chi_{[\mathbb Z_p]}(q)=\sum_{n=0}^{p-1}\chi_{[\mathbb Z_p]}(p^{-1}q-\frac{n}{p})$ shows that $\phi\in \widetilde{{\mathcal V}}_1$ so that $\widetilde{{\mathcal V}}_0\subset \widetilde{{\mathcal V}}_1.$  Using mathematical induction, one shows that $\phi(p^{-j}q-\iota(a))\in {\mathcal V}_{j+1}$ whenever $a\in \frac{1}{p^j}\mathbb Z.$  It follows that $\widetilde{{\mathcal V}}_j\in \widetilde{{\mathcal V}}_{j+1}$ for all $j\geq 0,$ and that dilation by $\frac{1}{p}$ carries $\widetilde{{\mathcal V}}_j$ into, but not onto, $\widetilde{{\mathcal V}}_{j+1}.$

It only remains to verify condition $(3),$ that $\cup_{j\in\mathbb Z}\widetilde{{\mathcal V}}_j$ is dense in $L^2(\mathbb Q_p).$  Let $f\in L^2(\mathbb Q_p)$ and fix $\epsilon>0.$  Since $\cup_{j=0}^{\infty}\overline{\text{span}}\{p^{j/2}\chi_{[\mathbb Z_p]}(p^{-j}\cdot -n):\;n\in \mathbb Q_p/\mathbb Z_p\}$ is dense in $L^2(\mathbb Q_p),$  we know that $\cup_{j=0}^{\infty}\text{span}\{p^{j/2}\chi_{[\mathbb Z_p]}(p^{-j}\cdot -n):\;n\in \mathbb Q_p/\mathbb Z_p\}$ is dense in $L^2(\mathbb Q_p),$ so that there exists $J,\;, M,\;N\;\in \mathbb N,\;a_1,\;a_2,\;\cdots, a_M\in \mathbb C,$ and $n_1,n_2,\cdots n_M\in \mathbb Q_p/\mathbb Z_p$ such that 
$$\|f-\sum_{i=1}^Ma_i\chi_{[\mathbb Z_p]}(p^{-J}\cdot -n_i)\|<\frac{\epsilon}{2}.$$
By choosing a common denominator, we can find $N\in \mathbb N$ and 
$k_1,k_2,\cdots, k_M\in \mathbb Z$ with $|k_i|<p^N$ such that 
$$n_i=\iota(\frac{k_i}{p^N}),\;1\leq i\leq M,$$
so that 
 $$\|f-\sum_{i=1}^Ma_i\chi_{[\mathbb Z_p]}(p^{-J}\cdot -\iota(\frac{k_i}{p^N}))\|<\epsilon.$$
 
 We consider a function of the form $\chi_{[\mathbb Z_p]}(p^{-J}\cdot -\iota(\frac{k_i}{p^N}_).$ We know $p^{-J}q -\iota(\frac{k_i}{p^N})\in \mathbb Z_p$ if and only if 
$p^{-J}q\in\;\mathbb Z_p+\iota(\frac{k_i}{p^N})$ if and only if $q\in p^J([\mathbb Z_p]+\iota(\frac{k_i}{p^{N}})).$  Depending on the parity of $k_i$ modulo $p^N,$ as $k_i$ runs from $0$ to $p^N-1$ the subsets $p^J([\mathbb Z_p]+\iota(\frac{k_i}{p^{N}}))$ are different disjoint sets whose union is $p^{J-N}\mathbb Z_p.$  We first assume that $N\geq J.$  In this case,
$$p^J[\mathbb Z_p]\;=\;\bigsqcup_{k=0}^{p^{N-J}-1}p^N[\mathbb Z_p+\frac{k}{p^{N-J}})].$$
Therefore,
$$\chi_{p^J[\mathbb Z_p]}(q)\;=\;\sum_{\ell=0}^{p^{N-J}-1}\chi_{p^N\mathbb Z_p+p^J\cdot \ell}(q)$$
and 
$$\chi_{p^J[\mathbb Z_p]+\iota(\frac{k_i}{p^{N-J}}})(q)\;=\;\sum_{\ell=0}^{p^{N-J}-1}\chi_{p^N\mathbb Z_p+p^J\cdot \ell+\iota(\frac{k_i}{p^{N-J}}})(q).$$
But this final equation implies that 
\begin{multline*}
\chi_{p^J[\mathbb Z_p]+\iota(\frac{k_i}{p^{N-J}}})\in\;\widetilde{{\mathcal V}}_N\;=\;\overline{\text{span}}\{\chi_{[\mathbb Z_p]}(p^{-N}q-\iota(\frac{k}{p^{2N}})):\;0\leq k\;\leq \;p^{2N}-1\}\\
=\text{span}\{\chi_{[\mathbb Z_p]}(p^{-N}q-\iota(\frac{k}{p^{2N}})):\;0\leq k\;\leq \;p^{2N}-1\}\\
=\text{span}\{\chi_{[p^N\mathbb Z_p+\iota(\frac{k}{p^N})]}(q):\;0\leq k\;\leq \;p^{2N}-1\}.
\end{multline*}
Therefore $\chi_{[\mathbb Z_p]}(p^{-J}\cdot -\iota(\frac{k_i}{p^N}))\in \widetilde{{\mathcal V}}_N.$

If $N<J$ we can write $p^{-J}q -\iota(\frac{k_i}{p^N})=p^{-J}a-\iota(\frac{k'}{p^J})$ for some integer $k' <p^J$ and in this case, $p^{-J}q -\iota(\frac{k'}{p^J})\in \mathbb Z_p$ if and only if 
$p^{-J}q\in\;\mathbb Z_p+\iota(\frac{k'}{p^J})$ if and only if $q\in p^J[\mathbb Z_p]+k'$ for $k'\in \{0,1,\cdots, p^J-1\}.$
But then $\chi_{[\mathbb Z_p]}(p^{-J}\cdot -\iota(\frac{k'}{p^J}))\in\;\widetilde{{\mathcal V}}_J.$

Choosing $K=\text{max}\{J,N\},$ it is clear that $\chi_{[\mathbb Z_p]}(p^{-J}\cdot -\iota(\frac{k_i}{p^N}))\in\;\widetilde{{\mathcal V}}_K,\;1\leq i\leq M,$ so that 
$\sum_{i=1}^Ma_i\chi_{[\mathbb Z_p]}(p^{-J}\cdot -\iota(\frac{k_i}{p^N}))\;\in\;\widetilde{{\mathcal V}}_K.$ 

Therefore, given $f\in L^2(\mathbb Q_p),$ and $\epsilon>0,$ we have found $K\geq 0$ and $\phi\in \widetilde{{\mathcal V}}_K$ with 
$$\|f-\phi\|<\epsilon.$$
Thus $\cup_{j\in\mathbb Z}\widetilde{{\mathcal V}}_j$ is dense in $L^2(\mathbb Q_p),$ and we have an example of a multiresolution structure, as desired.

\end{example}

\begin{remark}
Theorem 10 of \cite{AES} gives the somewhat surprising result that the only  multiresolution analysis for $L^2(\mathbb Q_p)$ generated by a an orthogonal test scaling function is the Haar multiresolution analysis defined above. (A scaling function $\phi$ is said to be {\it orthogonal} if $\{\phi(\cdot - a): a\in I_p\}$ is an orthonormal basis for $V_0$, and the space ${\mathcal D}$  of locally constant compactly supported functions on $\mathcal Q_p$ are called the space of {\it test functions} on $\mathbb Q_p.$)  It follows that we can use the Haar MRS of Example \ref{exHaarMRS} can be used to construct the unique Haar MRA in $L^2(\mathbb Q_p)$ coming from orthogonal test scaling functions.  The result in \cite{AES} also suggests to us that multiresolution structures might be of use, since these will distinguish between two different  orthogonal test scaling functions, whereas the MRA does not.  We intend to study the relationship between multiresolution structures and wavelets in $L^2(\mathbb Q_p)$ further in a future paper.  For the purposes of this paper, we restrict ourselves to the multiresolution structure corresponding to the Haar scaling function.

\end{remark}
\section{The Haar MRS for $L^2(\mathbb Q_p)$ and projective multiresolution structures}

We now want to use the Haar multiresolution structure for $L^2(\mathbb Q_p)$ of the previous section derived from the $p$-adic MRA of Shelkovich and Skopina to construct a {\it projective multiresolution structure} for the given projective module over $C^{\ast}(\Gamma, \Psi_{\alpha}).$

To do this, we modify a definition of B. Purkis (Ph.D. thesis 2014), \cite{Pur}, who generalized the notion of {\it projective multiresolution analysis} of M. Rieffel and the second author to the non-commutative setting.
\begin{definition} 
\label{defPMRS}
Let $\{C_j\}_{j=0}^{\infty}$ be a nested sequence of unital $C^{\ast}$-algebras with the direct limit $C^{\ast}$-algebra ${\mathcal C}$ preserving the unit, and let ${\mathcal X}$ be a finitely generated (left) projective ${\mathcal C}$-module. A \textit{projective multiresolution structure} (PMRS) for the pair $({\mathcal C},{\mathcal X})$ is a family $\{V_j\}_{j\geq 0}$ of closed subspaces of ${\mathcal X}$ such that
\begin{enumerate}
\item For all $j\geq 0,\;V_j$ is a finitely generated projective $C_j$-submodule of ${\mathcal X};$ i.e. $V_j$ is invariant under $C_j$ and $\langle V_j,\;V_j\rangle =C_j\subseteq {\mathcal C};$
\item $V_{j}\subset V_{j+1}$ for all $j\geq 0$
\item $\bigcup_{j=0}^{\infty}V_j$ is dense in ${\mathcal X}.$
\end{enumerate}
\end{definition}

We note that in the directed systems of equivalence bimodules defined in Section \ref{secdirsys}, the collection of $A_n$-modules $\{X_n\}$ is a projective multiresolution structure for the pair $({\mathcal A}, {\mathcal X}).$
 
\begin{remark}
Note also that the difference between projective multiresolution structures and projective multiresolution analyses is the following.  For projective multiresolution analyses, one has a fixed $C^{\ast}$-algebra ${\mathcal C}$ and a fixed Hilbert ${\mathcal C}$-module ${\mathcal X}$ (not necessarily finitely generated), along with a sequence $\{V_j\}_{j\geq 0}$ of nested, finitely generated projective Hilbert ${\mathcal C}$-modules such that $\bigcup_{j=0}^{\infty}V_j$ is dense in ${\mathcal X}.$  For projective multiresolution structures, each $V_j$ is a finitely generated projective $C_j$-module, but not necessarily a ${\mathcal C}$-module (indeed, in most examples we will study, each $V_j$ cannot be a ${\mathcal C}$-module, simply because ${\mathcal C}$ is ``too big" to be a $V_j$ module).
\end{remark}

\begin{example}
\label{ex1}
Fix an integer $p\geq 2,$ and consider the directed sequence of $C^{\ast}$-algebras below:
\[
C_{0}\; \stackrel{\varphi_0}{\longrightarrow}\; C_{1}\;\stackrel{\varphi_1}{\longrightarrow}\;C_{2} \;\stackrel{\varphi_2}{\longrightarrow}\; \cdots
\]
where $C_j=C(\mathbb T)$ and $\varphi_2(\iota_z)=(\iota_z)^p\in C_{j+1}=C(\mathbb T),$ where $\iota_z$ represents the identity function in $C(\mathbb T),$ given by $\iota_z(z)=z.$  The direct limit of the $\{C_j\}$ is ${\mathcal C}=C({\mathcal S}_{p}),$ the commutative $C^{\ast}$-algebra of all continuous complex-valued functions on the $p$-solenoid ${\mathcal S}_{p}.$   This follows from the fact that as a topological space, ${\mathcal S}_{p}$ can be constructed as an inverse limit of circles $\{\mathbb T\}.$

Now each $C_j$ is a singly generated free left module over itself where the inner product is defined by:
$$\langle f, g\rangle_{C_j}=f \cdot \overline{g},\;f,\;g \in\; C_j.$$
Similarly, $C({\mathcal S}_{p})$ is a singly generated free left module over itself.  Setting ${\mathcal X}=C({\mathcal S}_{p})$ and $V_j=C_j$ for $j\geq 0,$ we obtain a projective multiresolution structure for the pair $(C({\mathcal S}_{p}), C({\mathcal S}_{p})).$  We note that $V_j=C(\mathbb T)$ can never be a $C({\mathcal S}_{p})$-module.

\end{example}

\begin{example}
\label{ex2proj}
Example \ref{ex1} is a special case of the following more general setting, where we take $P=1.$ Suppose we are given a directed limit of $C^{\ast}$-algebras 
\[
A_{0}\; \stackrel{\varphi_0}{\longrightarrow}\; A_{1}\;\stackrel{\varphi_1}{\longrightarrow}\;A_{2} \;\stackrel{\varphi_2}{\longrightarrow}\; \cdots
\]
where each $A_j$ is unital and each $\varphi_j$ is a unital $\ast$-monomorphism. Therefore we can consider the $\{A_j\}_{j=0}^{\infty}$ as a nested sequence of subalgebras of the direct limit unital $C^{\ast}$-algebra ${\mathcal A}$. Let $P$ be any full projection in $A_0$ and let $V_j$ be the left Hilbert $A_j$ module given by $V_j=A_jP,$ with inner product defined by $\langle v_jP, w_jP\rangle_{A_j}=v_jPP^{\ast}w_j^{\ast}=v_jPw_j^{\ast}$ for $v_j,\;w_j\in A_j.$  Then taking ${\mathcal X}={\mathcal A}P,$ we obtain that $\{A_jP\}_{j=0}^{\infty}$ is a projective multiresolution structure for the pair $({\mathcal A}, {\mathcal X}={\mathcal A}P).$ 
\end{example}

We now aim to build up a projective multiresolution structure for the pair $(C^{\ast}(\Gamma, \alpha), \overline{C_C(\mathbb Q_p\times \mathbb R)})$ defined in the previous section.
Our strategy will be as follows:  for each $j,$ using the canonical embedding of $A_{\alpha_{2j}}$ in $C^{\ast}(\Gamma, \alpha)$  described in Proposition \ref{propriefcan}, we will construct a subspace $V_j\subset \overline{C_C(\mathbb Q_p\times \mathbb R)}$ that is invariant under the actions of $U_{\alpha,j}$ and $V_{\alpha,j},$  hence is an  $A_{\alpha_{2j}}$-module, where $A_{\alpha_{2j}}$ viewed as a subalgebra of $C^{\ast}(\Gamma, \alpha)=C^{\ast}(D_{\theta},\eta).$ It will also be the case that  that 
$\langle V_j, V_j \rangle_{C^{\ast}(\Gamma, \alpha)}$ is dense in the image of $A_{\alpha_{2j}}$ viewed as a subalgebra of $C^{\ast}(\Gamma, \alpha)=C^{\ast}(D_{\theta},\eta).$

We first construct the subspaces $V_j\subset \overline{C_C(\mathbb Q_p\times \mathbb R)}$ and calculate the $A_{\alpha_{2j}}$-action and $A_{\alpha_{2j}}$-valued inner product on these subspaces.
We recall first that the ring of $p$-adic integers $\mathbb Z_p$ sits inside the $p$-adic rationals as a compact open subgroup.  Similarly, for every $j\geq 0,$ the ring $p^j\mathbb Z_p$ sits inside $\mathbb Q_p.$

\begin{definition}
\label{defRiefsubm}
For $j\geq 0,$ let ${\bf F_{p^{2j}}}=\mathbb Z/p^{2j}\mathbb Z,$ and consider the following embedding 
$$\rho^{(j)}:C_C({\bf F_{p^{2j}}}\times \mathbb R)\to C_C(\mathbb Q_p\times \mathbb R)$$ defined on generators by:
$$\rho^{(j)}(\chi_{\{m\}}\otimes f)(q,t)\;=\;[\sqrt{p}]^j\chi_{[\iota(\frac{m}{p^j})+p^j\mathbb Z_p]}(q)f(t),\;0\leq m \leq p^{2j}-1.$$
Denote for $j\geq 0,$ 
$$V_j=\overline{\rho^{(j)}(C_C({\bf F_{p^{2j}}}\times \mathbb R))}\subset \overline{C_C(\mathbb Q_p\times \mathbb R)}.$$

\end{definition}
We now state and prove a major lemma of this paper.
\begin{lemma}
\label{lemmodmra}
For $j\geq 0,$ let $V_j$ be as defined in Definition \ref{defRiefsubm}. Let $D^j$ be the subgroup of $D$ defined by 
\begin{equation*}
D^j = \left\{\left(\left(\iota(\frac{k_1}{p^j}),\theta\cdot \frac{k_1}{p^j}\right), \left(\iota(\frac{k_2}{p^j}), \frac{k_2}{p^j}\right)\right):\; k_1, k_2\in\mathbb{Z}\right\}.
\end{equation*}

Consider the $C^{\ast}$-subalgebra $C^{\ast}(D^j,\eta^{(j)})$  of $C^{\ast}(D_{\theta},\eta)$ associated to $D^j;$ note that this subalgebra is generated by $U_{\alpha,j}$ and $V_{\alpha,j}.$ Then 
\begin{enumerate}
\item  $V_j$ is a $A_{(\theta+1)/p^{2j}}=C^{\ast}(D^j,\eta^{(j)})$-module, i.e. $V_j$ is invariant under the action of $C^{\ast}(D^j,\eta^{(j)});$
\item $\langle V_j, V_j\rangle_{C^{\ast}(D_{\theta},\eta)}\;\subseteq C^{\ast}(D^j,\eta^{(j)})$ (i.e. all inner products on the left-hand side vanish off of $D^j.$)
\end{enumerate}
\end{lemma}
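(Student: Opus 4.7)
The plan is to verify both assertions on the dense span of generators $F_{m,f}:=\rho^{(j)}(\chi_{\{m\}}\otimes f)$ with $0\le m< p^{2j}$ and $f\in C_C(\mathbb{R})$, since the module action and the Rieffel inner product are continuous. Recall from Rieffel's Heisenberg construction that $\delta_{((a,\sigma),(b,\tau))}\in C^\ast(D_\theta,\eta)$ acts on $F\in C_C(M)$ by combining a translation in $M$ by $(a,\sigma)$ with multiplication by the character $\chi_{(b,\tau)}$, while the $C^\ast(D_\theta,\eta)$-valued inner product evaluated at $((a,\sigma),(b,\tau))$ is the integral over $M$ of $F$ against the shift of $\bar G$ twisted by a character.

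For (1), $U_{\alpha,j}$ acts as pure translation by $(\iota(1/p^j),\theta/p^j)$ and $V_{\alpha,j}$ as pure character multiplication. The key observation is that inside $\mathbb{Q}_p$ the $p^{2j}$ cosets $\{\iota(m/p^j)+p^j\mathbb{Z}_p:0\le m< p^{2j}\}$ form a single cyclic orbit under translation by $\iota(1/p^j)$ (since $\iota(p^{2j}/p^j)=\iota(p^j)\in p^j\mathbb{Z}_p$), so $U_{\alpha,j}\cdot F_{m,f}=F_{(m+1)\bmod p^{2j},\,f(\cdot-\theta/p^j)}\in V_j$. Moreover, the $p$-adic character $q\mapsto e^{2\pi i\{q\iota(1/p^j)\}_p}$ is constant on each coset $\iota(m/p^j)+p^j\mathbb{Z}_p$, with value $e^{2\pi im/p^{2j}}$, because $p^j\mathbb{Z}_p\cdot\iota(1/p^j)\subset\mathbb{Z}_p$; combined with the real character $e^{2\pi it/p^j}$, this gives $V_{\alpha,j}\cdot F_{m,f}=e^{2\pi im/p^{2j}}F_{m,\,e^{2\pi i(\cdot)/p^j}f}\in V_j$. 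Thus $V_j$ is invariant under the subalgebra generated by $U_{\alpha,j}$ and $V_{\alpha,j}$, which is $C^\ast(D^j,\eta^{(j)})$.

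For (2), I would compute $\langle F_{m_1,f_1},F_{m_2,f_2}\rangle_{C^\ast(D_\theta,\eta)}$ at a fixed point $((\iota(r_1),\theta r_1),(\iota(r_2),r_2))\in D_\theta$. The integral factorizes as a product of a $q$-integral over $\mathbb{Q}_p$ and a $t$-integral over $\mathbb{R}$; the $q$-integral is the integral of the character $q\mapsto e^{2\pi i\{q\iota(r_2)\}_p}$ over the intersection $[\iota(m_1/p^j)+p^j\mathbb{Z}_p]\cap[\iota(r_1)+\iota(m_2/p^j)+p^j\mathbb{Z}_p]$. Non-vanishing imposes two independent $p$-adic conditions: (a) the two cosets must coincide, forcing $\iota(r_1)\in\iota((m_1-m_2)/p^j)+p^j\mathbb{Z}_p\subset p^{-j}\mathbb{Z}_p$; and (b) the character must restrict trivially to the subgroup $p^j\mathbb{Z}_p$, which by Pontryagin duality forces $p^j\iota(r_2)\in\mathbb{Z}_p$, i.e.\ $\iota(r_2)\in p^{-j}\mathbb{Z}_p$. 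A short valuation argument yields that for $r\in\Qadic{1}{p}$, $\iota(r)\in p^{-j}\mathbb{Z}_p$ if and only if $r\in(1/p^j)\mathbb{Z}$: a denominator larger than $p^j$ would force $\iota(r)$ to have $p$-adic valuation strictly less than $-j$. Hence both $r_1$ and $r_2$ lie in $(1/p^j)\mathbb{Z}$, i.e.\ the point lies in $D^j$.

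The main obstacle is notational bookkeeping in this hybrid $p$-adic/real setting: one must carefully distinguish the rational $m/p^j$ from its $p$-adic image $\iota(m/p^j)$ and track fractional parts in expressions like $\{q\iota(r_2)\}_p$. Once those conventions are fixed, both claims reduce to the single $p$-adic fact $\iota^{-1}(p^{-j}\mathbb{Z}_p)\cap\Qadic{1}{p}=(1/p^j)\mathbb{Z}$, invoked once to enforce coset alignment in part (1) and in condition (a) of part (2), and once via Pontryagin duality on $p^j\mathbb{Z}_p$ for condition (b).
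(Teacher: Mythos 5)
Your proposal is correct and follows essentially the same route as the paper: verify both claims on the spanning generators $\rho^{(j)}(\chi_{\{m\}}\otimes f)$, compute the actions of $U_{\alpha,j}$ and $V_{\alpha,j}$ explicitly (cyclic permutation of the cosets $\iota(m/p^j)+p^j\mathbb{Z}_p$ for $U_{\alpha,j}$, and constancy of the character $q\mapsto e^{2\pi i\{q\,\iota(1/p^j)\}_p}$ on each such coset for $V_{\alpha,j}$), and for the inner product factor the integral into a $\mathbb{Q}_p$-part and an $\mathbb{R}$-part and argue from the arithmetic of cosets of $p^j\mathbb{Z}_p$. (Whether the $U_{\alpha,j}$-shift sends $m$ to $m+1$ or $m-1$ modulo $p^{2j}$ is only a sign convention in the module action and does not affect invariance.)

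One point where your argument is actually \emph{more} complete than the paper's: for part (2) the paper parametrizes the points of $D_{\theta}\setminus D^j$ as $\left(\left(\iota(\tfrac{k_1}{p^{\ell}}),\tfrac{\theta k_1}{p^{\ell}}\right),\left(\iota(\tfrac{k_2}{p^{\ell}}),\tfrac{k_2}{p^{\ell}}\right)\right)$ with $\ell>j$ and \emph{both} $k_1,k_2$ coprime to $p$, and derives the vanishing solely from your condition (a), namely disjointness of the two cosets unless $p^{\ell-j}\mid k_1$. This leaves untreated the elements with $r_1\in\frac{1}{p^j}\mathbb{Z}$ but $r_2\notin\frac{1}{p^j}\mathbb{Z}$, for which the cosets do coincide and one must instead invoke your condition (b) — the character $q\mapsto e^{-2\pi i\{q\,\iota(r_2)\}_p}$ integrates to zero over a coset of $p^j\mathbb{Z}_p$ unless it is trivial on $p^j\mathbb{Z}_p$, forcing $\iota(r_2)\in p^{-j}\mathbb{Z}_p$. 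Your two independent conditions, each reduced to the fact that $\iota^{-1}(p^{-j}\mathbb{Z}_p)\cap\Qadic{1}{p}=\frac{1}{p^j}\mathbb{Z}$, cover all of $D_\theta\setminus D^j$ and close this small gap.
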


\begin{proof}
We note that given $m_1, m_2\in\{0,1,\cdots,p^{2j}-1\}$ and $f_1,\;f_2\in C_C(\mathbb R),$ and $k_1,\;k_2\in\mathbb Z$ relatively prime to $p,$ and $\ell\in\mathbb N$ with $\ell>j,$ we have
\begin{equation*}
\begin{split}
&\quad \langle (\rho^{(j)}(\chi_{m_1}\otimes f_1), \rho^{(j)}(\chi_{m_2}\otimes f_2) \rangle_{C^{\ast}(D_{\theta},\eta)}((\iota(\frac{k_1}{p^{\ell}}), \frac{\theta \cdot k_1}{p^{\ell}}), (\iota(\frac{k_2}{p^{\ell}}), \frac{k_2}{p^{\ell}})))\\
&=\langle [\sqrt{p}]^j\chi_{[\iota(\frac{m_1}{p^j})+p^j\mathbb Z_p]}(q)f_1(t), \\
&\quad\quad[\sqrt{p}]^j\chi_{[\iota(\frac{m_2}{p^j})+p^j\mathbb Z_p]}(q)f_2(t) \rangle_{C^{\ast}(D_{\theta},\eta)}(((\iota(\frac{k_1}{p^{\ell}}), \frac{\theta \cdot k_1}{p^{\ell}}), (\iota(\frac{k_2}{p^{\ell}}), \frac{k_2}{p^{\ell}})))\\
&=\;\int_{\mathbb Q_p}\int_{\mathbb R}([\sqrt{p}]^j)^2\chi_{[\iota(\frac{m_1}{p^j})+p^j\mathbb Z_p]}(q)f_1(t)e^{-2\pi i\{q\cdot \frac{k_2}{p^{\ell}}\}_p}e^{-2\pi i t\cdot \frac{k_2}{p^{\ell}}}\\
&\quad\quad\chi_{[\iota(\frac{m_2}{p^j})+p^j\mathbb Z_p]}(q+\frac{k_1}{p^{\ell}})\overline{f_2(t+\frac{k_1\cdot \theta}{p^{\ell}})}dqdt\\
&=\;p^j\int_{\mathbb Q_p}e^{-2\pi i\{q\cdot \frac{k_2}{p^{\ell}}\}_p}\chi_{[\iota(\frac{m_1}{p^j})+p^j\mathbb Z_p]}(q)\chi_{[\iota(\frac{m_2}{p^j}-\frac{k_1}{p^{\ell}})+p^j\mathbb Z_p]}(q)dq\\
&\quad\quad\int_{\mathbb R}e^{-2\pi i t\cdot \frac{k_2}{p^{\ell}}}f_1(t)
\overline{f_2(t+\frac{k_1\cdot \theta}{p^{\ell}})}dt.
\end{split}
\end{equation*}
We examine the term 
$$\int_{\mathbb Q_p}e^{-2\pi i\{q\cdot \frac{k_2}{p^{\ell}}\}_p}\chi_{[\iota(\frac{m_1}{p^j})+p^j\mathbb Z_p]}(q)\chi_{[\iota(\frac{m_2}{p^j}-\frac{k_1}{p^{\ell}})+p^j\mathbb Z_p]}(q)dq.$$
We first remark that the subsets $\{\iota(\frac{m}{p^j})+ p^j\mathbb Z_p: 0\leq m\leq p^{2j}-1\}$ are pairwise disjoint and their union is equal to $\frac{1}{p^j}\mathbb Z_p.$  Secondly, $\iota(\frac{k}{p^{\ell}})+p^j\mathbb Z_p=\iota(\frac{k'}{p^{\ell}})+p^j\mathbb Z_p$ if and only if $\frac{k-k'}{p^{\ell}}=0$ modulo $p^{j}.$
We also note that 
\begin{equation*}
\begin{split}
&\quad \int_{\mathbb Q_p}e^{-2\pi i\{q\cdot \frac{k_2}{p^{\ell}}\}_p}\chi_{[\iota(\frac{m_1}{p^j})+p^j\mathbb Z_p]}(q)\chi_{[\iota(\frac{m_2}{p^j}-\frac{k_1}{p^{\ell}})+p^j\mathbb Z_p]}(q)\,dq\\
&=\int_{\mathbb Q_p}e^{-2\pi i\{q\cdot \iota(\frac{k_2}{p^{\ell}})\}_p}\chi_{[p^j\mathbb Z_p]}(q-\iota(\frac{m_1}{p^j}))\chi_{[\iota(\frac{m_2}{p^j}-\frac{k_1}{p^{\ell}})+p^j\mathbb Z_p]}(q)\,dq\\
&=\int_{\mathbb Q_p}e^{-2\pi i\{(q+\iota(\frac{m_1}{p^j}))\cdot \iota(\frac{k_2}{p^{\ell}})\}_p}\chi_{[p^j\mathbb Z_p]}(q)\chi_{[\iota(\frac{m_2}{p^j}-\frac{k_1}{p^{\ell}})+p^j\mathbb Z_p]}(q+\iota(\frac{m_1}{p^j}))\,dq\\
&= e^{-2\pi i\{\iota(\frac{m_1}{p^j})\cdot \iota(\frac{k_2}{p^{\ell}})\}_p}\int_{\mathbb Q_p}e^{-2\pi i\{q\cdot \iota(\frac{k_2}{p^{\ell}})\}_p}\chi_{[p^j\mathbb Z_p]}(q)\chi_{[\iota(\frac{m_2-m_1}{p^j}-\frac{k_1}{p^{\ell}})+p^j\mathbb Z_p]}(q)\,dq\\
&= e^{-2\pi i\{\iota(\frac{m_1}{p^j}\cdot \frac{k_2}{p^{\ell}})\}_p}\int_{\mathbb Q_p}e^{-2\pi i\{q\cdot \iota(\frac{k_2}{p^{\ell}})\}_p}\chi_{[p^j\mathbb Z_p]}(q)\chi_{[\iota(\frac{m_2-m_1}{p^j}-\frac{k_1}{p^{\ell}})+p^j\mathbb Z_p]}(q)\,dq\\
&=\;e^{-2\pi i\{\iota(\frac{m_1}{p^j}\cdot \frac{k_2}{p^{\ell}})\}_p}\int_{\mathbb Q_p}e^{-2\pi i\{q\cdot \iota(\frac{k_2}{p^{\ell}})\}_p}\chi_{[p^j\mathbb Z_p]}(q)\chi_{[\iota(\frac{m'}{p^j}-\frac{k_1}{p^{\ell}})+p^j\mathbb Z_p]}(q)
\intertext{(where $m'\in\{0,1,\cdots p^{2j}-1\}$ is equal to $m_2-m_1$ modulo $p^{2j}$)}
&=e^{-2\pi i\{\iota(\frac{m_1}{p^j}\cdot \frac{k_2}{p^{\ell}})\}_p}\int_{\mathbb Q_p}e^{-2\pi i\{q\cdot \iota(\frac{k_2}{p^{\ell}})\}_p}\chi_{[p^j\mathbb Z_p]}(q)\chi_{[\iota(\frac{m'\cdot p^{\ell-j} -k_1}{p^{\ell}})+p^j\mathbb Z_p]}(q)\,dq.
\end{split}
\end{equation*}
We make the observation related to the observation above that the subsets
\begin{equation*}
\left\{\iota(\frac{m}{p^{\ell}})+ p^j\mathbb{Z}_p: 0\leq m\leq p^{j+\ell}-1\right\}
\end{equation*}
are pairwise disjoint and their union is equal to $\frac{1}{p^{\ell}}\mathbb{Z}_p.$
Therefore in order that our product not be zero we need 
$m'\cdot p^{\ell-j}-k_1=0$ modulo $p^{j+\ell};$  that is we need $m'p^{\ell-j}=k_1+j\cdot p^{j+\ell}$ for some $j\in \mathbb{Z}.$
This means 
\begin{equation*}
k_1=p^{\ell-j}(m'-p^{2j})\text{.}
\end{equation*}

But this means $k_1$ is divisible by $p^{\ell-j},$ a positive power of $p,$ which we assumed not to be the case.
Therefore $$\chi_{[p^j\mathbb Z_p]}(q)\chi_{[\iota(\frac{m'\cdot p^{\ell-j} -k_1}{p^{\ell}})+p^j\mathbb Z_p]}(q)=0$$ so that our inner product must be zero off of the subgroup $D^j,$ and $\langle V_j, V_j\rangle$ takes on values only in $C^{\ast}(D^j,\eta^{(j)}).$ 

For future reference we provide a formula for the inner product in the case where $\ell \leq\;j.$ As before, we let 
$m_1, m_2\in\{0,1,\cdots,p^{2j}-1\}$ and $f_1,\;f_2\in C_C(\mathbb R),$ and now take $k_1,\;k_2\in\mathbb Z$ not necessarily relatively prime to $p.$ Then 
\begin{equation*}
\begin{split}
&\quad\langle [\sqrt{p}]^j\chi_{[\iota(\frac{m_1}{p^j})+p^j\mathbb Z_p]}(q)f_1(t),\\
&\quad\quad [\sqrt{p}]^j\chi_{[\iota(\frac{m_2}{p^j})+p^j\mathbb Z_p]}(q)f_2(t) \rangle_{C^{\ast}(D_{\theta},\eta)}(((\iota(\frac{k_1}{p^j}), \frac{\theta \cdot k_1}{p^j}), (\iota(\frac{k_2}{p^j}), \frac{k_2}{p^j})))\\
&=\int_{\mathbb Q_p}\int_{\mathbb R}([\sqrt{p}]^j)^2\chi_{[\iota(\frac{m_1}{p^j})+p^j\mathbb Z_p]}(q)f_1(t)e^{-2\pi i\{q\cdot \iota(\frac{k_2}{p^j})\}_p}e^{-2\pi i t\cdot \iota(\frac{k_2}{p^j}})\\
&\quad\quad\chi_{[\iota(\frac{m_2}{p^j})+p^j\mathbb Z_p]}(q+\iota(\frac{k_1}{p^j}))\overline{f_2(t+\frac{k_1\cdot \theta}{p^j})}dqdt\\
&=p^j\int_{\mathbb Q_p}e^{-2\pi i\{q\cdot \iota(\frac{k_2}{p^j})\}_p}\chi_{[\iota(\frac{m_1}{p^j})+p^j\mathbb Z_p]}(q)\chi_{[\iota(\frac{m_2}{p^j}-\frac{k_1}{p^j})+p^j\mathbb Z_p]}(q)dq\\
&\quad\quad\int_{\mathbb R}e^{-2\pi i t\cdot \frac{k_2}{p^{j}}}f_1(t)
\overline{f_2(t+\frac{k_1\cdot \theta}{p^j})}dt.
\end{split}
\end{equation*}
As before we consider the term 
$$\int_{\mathbb Q_p}e^{-2\pi i\{q\cdot \iota(\frac{k_2}{p^j})\}_p}\chi_{[\iota(\frac{m_1}{p^j})+p^j\mathbb Z_p]}(q)\chi_{[\iota(\frac{m_2-k_1}{p^j})+p^j\mathbb Z_p]}(q)dq.$$
If $m_1\not=m_2-k_1$ modulo $p^{2j},$ that is, if $k_1\not=m_2-m_1$ modulo $p^{2j},$ then the product of the characteristic functions is equal to $0$, since the intersection of the sets involved will be empty.  If $k_1=m_2-m_1$ modulo $p^{2j},$ then the integral becomes 
\begin{equation*}
\begin{split}
&\quad \int_{\mathbb {Q}_p}e^{-2\pi i\{q\cdot \iota(\frac{k_2}{p^j})\}_p}\chi_{[\iota(\frac{m_1}{p^j})+p^j\mathbb Z_p]}(q)dq\\
&=\int_{\mathbb Q_p}e^{-2\pi i\{q\cdot \iota(\frac{k_2}{p^j})\}_p}\chi_{[p^j\mathbb Z_p]}(q-\iota(\frac{m_1}{p^j}))dq\\
&=\int_{\mathbb Q_p}e^{-2\pi i\{(q'+\iota(\frac{m_1}{p^j}))\cdot \iota(\frac{k_2}{p^j})\}_p}\chi_{[p^j\mathbb Z_p]}(q')dq'\\
&=e^{-2\pi i\{(\iota(\frac{m_1k_2}{p^{2j}}))\}_p}\int_{p^j\mathbb Z_p}e^{-2\pi i\{q'\cdot \iota(\frac{k_2}{p^j})\}_p}1dq'\\
&=e^{-2\pi i\{(\iota(\frac{m_1k_2}{p^{2j}}))\}_p}\int_{p^j\mathbb Z_p}1\cdot 1dq'
\end{split}
\end{equation*}
(since for $q'\in p^j\mathbb Z_p,$ we know that $\{q'\cdot \iota(\frac{k_2}{p^j})\}_p=0,$)
$$=\frac{1}{p^j}\cdot e^{-2\pi i\{(\iota(\frac{m_1k_2}{p^{2j}}))\}_p}$$
(since the measure of $p^j\mathbb Z_p$ is equal to $\frac{1}{p^j}.$) 

Therefore, if $k_1=m_2-m_1$ modulo $p^{2j},$ we have 
$$\langle [\sqrt{p}]^j\chi_{[\iota(\frac{m_1}{p^j})+p^j\mathbb Z_p]}(q)f_1(t), [\sqrt{p}]^j\chi_{[\iota(\frac{m_2}{p^j})+p^j\mathbb Z_p]}(q)f_2(t) \rangle_{C^{\ast}(D_{\theta},\eta)}(((\iota(\frac{k_1}{p^j}), \frac{\theta \cdot k_1}{p^j}), (\iota(\frac{k_2}{p^j}), \frac{k_2}{p^j})))$$
$$\;p^j\frac{1}{p^j}\cdot e^{-2\pi i\{(\iota(\frac{m_1k_2}{p^{2j}}))\}_p}\int_{\mathbb R}e^{-2\pi i t\cdot \frac{k_2}{p^{j}}}f_1(t)
\overline{f_2(t+\frac{k_1\cdot \theta}{p^j})}dt$$
$$\;=\; e^{-2\pi i\{(\iota(\frac{m_1k_2}{p^{2j}}))\}_p}\int_{\mathbb R}e^{-2\pi i t\cdot \frac{k_2}{p^{j}}}f_1(t)
\overline{f_2(t+\frac{k_1\cdot \theta}{p^j})}dt.$$

This gives the formula 
\begin{multline*}
\langle [\sqrt{p}]^j\chi_{[\iota(\frac{m_1}{p^j})+p^j\mathbb Z_p]}(q)f_1(t), \\
[\sqrt{p}]^j\chi_{[\iota(\frac{m_2}{p^j})+p^j\mathbb Z_p]}(q)f_2(t) \rangle_{C^{\ast}(D_{\theta},\eta)}(((\iota(\frac{k_1}{p^j}), \frac{\theta \cdot k_1}{p^j}), (\iota(\frac{k_2}{p^j}), \frac{k_2}{p^j})))\\
=\begin{cases}
{0}\mbox{ if}\ \;k_1\not=m_2-m_1\;\text{mod}\;p^{2j},\\
{e^{-2\pi i\{(\iota(\frac{m_1k_2}{p^{2j}}))\}_p}\int_{\mathbb R}e^{-2\pi i t\cdot \frac{k_2}{p^{j}}}f_1(t)
\overline{f_2(t+\frac{k_1\cdot \theta}{p^j})}dt}\\ \quad\quad\mbox{if}\;k_1\;=m_2-m_1\;\text{mod}\;p^{2j}.
\end{cases}
\end{multline*}

From this it follows that:
\begin{multline*}
\langle p^{-2j}\chi_{[\iota(\frac{m_1}{p^j})+p^j\mathbb Z_p]}(q)f_1(t),\\
 p^{-2j}\chi_{[\iota(\frac{m_2}{p^j})+p^j\mathbb Z_p]}(q)f_2(t) \rangle_{C^{\ast}(D_{\theta},\eta)}(((\iota(\frac{k_1}{p^j}), \frac{\theta \cdot k_1}{p^j}), (\iota(\frac{k_2}{p^j}), \frac{k_2}{p^j})))\\
=\begin{cases}
{0}\mbox{ if}\ \;k_1\not=m_2-m_1\;\text{mod}\;p^{2j},\\
{\frac{1}{p^{3j}}e^{-2\pi i\{(\iota(\frac{m_1k_2}{p^{2j}}))\}_p}\int_{\mathbb R}e^{-2\pi i t\cdot \frac{k_2}{p^{j}}}f_1(t)
\overline{f_2(t+\frac{k_1\cdot \theta}{p^j})}dt}\\\mbox{if}\;k_1\;=m_2-m_1\;\text{mod}\;p^{2j}.
\end{cases}
\end{multline*}

 We now show that $V_j$ is invariant under the algebra elements $U_{\alpha,j}$ and $V_{\alpha,j}$ so thus is a $C^{\ast}(D^j,\eta^{(j)})$-module.
Consider the element $\sqrt{p}\chi_{[\iota(\frac{m}{p^j})+p^j\mathbb Z_p]}(q)f(t)\;\in\;V_j$ where $m\in \{0,1,\cdots, p^{2j}-1\}$ and $f\in C_C(\mathbb R).$
Then by definition, 
$$U_{\alpha,j}([\sqrt{p}]^j\chi_{[\iota(\frac{m}{p^j})+p^j\mathbb Z_p]}\otimes f)(q,t)\;=\;\chi_{[\iota(\frac{m}{p^j})+p^j\mathbb Z_p]}(q+\iota(\frac{1}{p^j}))f(t+\frac{\theta}{p^j})$$
$$=\;[\sqrt{p}]^j\chi_{[\iota(\frac{m-1}{p^j})+p^j\mathbb Z_p]}(q)f(t+\frac{\theta}{p^j})=[\sqrt{p}]^j\chi_{[\iota(\frac{m'}{p^j})+p^j\mathbb Z_p]}(q)f(t+\frac{\theta}{p^j}),$$
where $m'\in\{0,1,\cdots, p^{2j}-1\}$and $m'=m-1$ modulo $p^{2j}.$  Therefore 
$U_{\alpha,j}(V_j)\subseteq V_j.$
Also, by definition, 
$$V_{\alpha,j}([\sqrt{p}]^j\chi_{[\iota(\frac{m}{p^j})+p^j\mathbb Z_p]}\otimes f)(q,t)\;=\; <(\frac{1}{p^j},\frac{1}{p^j}), (q,t)>[\sqrt{p}]^j\chi_{\iota(\frac{m}{p^j})+p^j\mathbb Z_p]}(q)f(t)$$
$$=\;e^{2\pi i \frac{t}{p^{j}}}e^{2\pi i \{q\cdot \iota(\frac{1}{p^j})\}_p}[\sqrt{p}]^j\chi_{[\iota(\frac{m}{p^j})+p^j\mathbb Z_p]}(q)f(t)$$
$$=\;\left\{\begin{array}{rr}
{0,}&\mbox{if}\ \;q\;\notin \iota(\frac{m}{p^j})+p^j\mathbb Z_p,\\
{[\sqrt{p}]^je^{2\pi i\frac{m}{p^{2j}}}e^{2\pi i \frac{t}{p^j}}f(t),}&\mbox{if}\;q\;\in \frac{m}{p^j}+p^j\mathbb Z_p.
\end{array}\right.$$
$$=\;[\sqrt{p}]^j\chi_{[\iota(\frac{m}{p^j})+p^j\mathbb Z_p]}(q)\cdot e^{2\pi i\frac{p^jt+m}{p^{2j}}}f(t).$$
Therefore $V_{\alpha,j}(V_j)\subseteq V_j$ also, so that $V_j$ is a $C^{\ast}(D^j,\eta^{(j)})$-module, as desired.

\end{proof}

We now work on showing that for every $j\geq 0,$ the left $A_{\alpha_{2j}}$-module described in Proposition \ref{PropRieffelbim} is isomorphic as a left $A_{\alpha_{2j}}$-rigged module to the left $C^{\ast}(D^j,\eta^{(j)})\cong A_{\alpha_{2j}}$-module $V_j$ described above in Lemma \ref{lemmodmra}.

Fix $j\in \mathbb N\cup \{0\}.$ Let $G=G=\mathbb R\times {\bf F_{p^{2j}}}$ and $\overline{C_C(G)}$ be as defined in Proposition \ref{PropRieffelbim}, and let $V_j$ be as defined above.  We define a map 
$\Psi_j:\;\overline{C_C(G)}\to V_j$ on a spanning set of $\overline{C_C(G)}$ and $V_j$ by: 
$$\Psi_j(f\otimes \delta_m)(q,t)\;=\;p^{-j}f(p^jt)\chi_{[\iota(\frac{-m}{p^j})+p^j\mathbb Z_p]}(q),$$
where $f\in C_C(\mathbb R)$ and $m\in {\bf F}_{p^{2j}}=\{0,1,\cdots, p^{2j}-1\}.$

For $f_1,\;f_2\in C_C(\mathbb R)$ and $m_1,\;m_2\in  {\bf F}_{p^{2j}}$ we obtain:
\begin{equation*}
\begin{split}
&\quad \langle \Psi_j(f_1\otimes \delta_{m_1}), \Psi_j(f_2\otimes \delta_{m_1}) \rangle_{C^{\ast}(D_{\theta},\eta)}(((\iota(\frac{k_1}{p^j}), \frac{\theta \cdot k_1}{p^j}), (\iota(\frac{k_2}{p^j}), \frac{k_2}{p^j})))\\ 
&=\langle p^{-j}\chi_{[\iota(\frac{-m_1}{p^j})+p^j\mathbb Z_p]}(q)f_1(p^jt),\\
&\quad\quad p^{-j} \chi_{[\iota(\frac{-m_2}{p^j})+p^j\mathbb Z_p]}(q)f_2(p^jt) \rangle_{C^{\ast}(D_{\theta},\eta)}(((\iota(\frac{k_1}{p^j}), \frac{\theta \cdot k_1}{p^j}), (\iota(\frac{k_2}{p^j}), \frac{k_2}{p^j})))\\
&=\begin{cases}
{0}\mbox{ if }k_1\not=-m_2-(-m_1)\;\text{mod}\;p^{2j},\\
{\frac{1}{p^{j}}e^{-2\pi i\{(\iota(\frac{-m_1k_2}{p^{2j}}))\}_p}\int_{\mathbb R}e^{-2\pi i t\cdot \frac{k_2}{p^{j}}}f_1(p^jt)
\overline{f_2(p^jt+k_1\cdot \theta)}dt}\\ \quad\quad \mbox{if }\;k_1\;=-m_2-(-m_1)\;\text{mod}\;p^{2j},
\end{cases}\\
&=\begin{cases}
{0}\mbox{ if}\ \;k_1\not=m_1-m_2\;\text{mod}\;p^{2j},\\
{\frac{1}{p^{j}}e^{2\pi i\{(\iota(\frac{m_1k_2}{p^{2j}}))\}_p}\int_{\mathbb R}e^{-2\pi i t\cdot \frac{k_2}{p^{j}}}f_1(p^jt)
\overline{f_2(p^jt+k_1\cdot \theta)}dt}\\
\quad\quad \mbox{if}\;k_1\;=\;m_1-m_2\;\text{mod}\;p^{2j}.
\end{cases}
\end{split}
\end{equation*}

We also note that for $f\in C_C(\mathbb R)$ and $m' \in {\bf F}_{p^{2j}}$ we have:
\begin{equation*}
\begin{split} 
&\quad \Psi_j(U_{\alpha_{2j}}\cdot ( f\otimes \delta_{m'})(r,[m]))(q,t)\\
&=\Psi_j(f_{\theta}\delta_{m'}(m-1))(q,t)\\
&=\Psi_j(f_{\theta}\otimes\delta_{[m'+1]})(q,t)\;=\;p^{-j}\chi_{[\iota(\frac{-(m'+1)}{p^j})+p^j\mathbb Z_p]}(q)f_(\theta(p^j(t))\\
&=p^{-j}\chi_{[\iota(\frac{-m'-1}{p^j})+p^j\mathbb Z_p]}(q)f(p^j(t)+\theta)
\end{split}
\end{equation*}
whereas,
\begin{equation*}
\begin{split}
U_{\alpha,j}(\Psi_j(f\otimes \delta_{m'})(r,[m]))(q,t)&=U_{\alpha,j}(p^{-j}f(p^jr)\chi_{[\iota(\frac{-m'}{p^j})+p^j\mathbb Z_p]}(q))(q,t)\\
&=p^{-j}f(p^j(t+\frac{\theta}{p^j}))\;\chi_{[\iota(\frac{-m'-1}{p^j})+p^j\mathbb Z_p]}(q)\\
&=\;p^{-j}\chi_{[\iota(\frac{-m'-1}{p^j})+p^j\mathbb Z_p]}(q)\;f(p^jt+\theta).
\end{split}
\end{equation*}
Therefore
\begin{equation*}
\Psi_j(U_{\alpha_{2j}}\cdot ( f\otimes \delta_{m'})(r,[m]))(q,t)\;=\;U_{\alpha,j}(\Psi_j(f\otimes \delta_{m'})(r,[m]))(q,t).
\end{equation*}

Similarly, for $f\in C_C(\mathbb R)$ and $m' \in {\bf F}_{p^{2j}}$ we have:
\begin{equation*}
\begin{split}
\Psi_j(V_{\alpha_{2j}}\cdot ( f\otimes \delta_{m'})(r,[m]))(q,t)&=\Psi_j(e^{2\pi i (r-m')/p^{2j}} f(r)\otimes \delta_{m'}(m))(q,t)\\
 &=\;p^{-j}e^{2\pi i (p^jt-m')/p^{2j}}f(p^j(t))\chi_{[\iota(\frac{-m'}{p^j})+p^j\mathbb Z_p]}(q)\\
& =p^{-j}e^{2\pi i \frac{p^jt-m'}{p^{2j}}}f(p^j(t))\chi_{[\iota(\frac{-m'}{p^j})+p^j\mathbb Z_p]}(q)\\
&=p^{-j}\chi_{[\iota(\frac{-m'}{p^j})+p^j\mathbb Z_p]}(q)e^{2\pi i \frac{p^jt-m'}{p^{2j}}}f(p^j(t)).
\end{split}
\end{equation*}
On the other hand,
\begin{equation*}
\begin{split}
V_{\alpha,j}(\Psi_j(f\otimes \delta_{m'})(r,[m]))(q,t)&=V_{\alpha,j}(p^{-j}f(p^jr)\chi_{[\iota(\frac{-m'}{p^j})+p^j\mathbb Z_p]}(q))(q,t)\\
&=e^{2\pi i\frac{p^jt-m'}{p^{2j}}}\cdot p^{-j}f(p^jt)\chi_{[\iota(\frac{-m'}{p^j})+p^j\mathbb Z_p]}(q)\\
&=p^{-j}\chi_{[\iota(\frac{-m'}{p^j})+p^j\mathbb Z_p]}(q)e^{2\pi i \frac{p^jt-m'}{p^{2j}}}f(p^j(t)).
\end{split}
\end{equation*}

Therefore:
\begin{equation*}
\Psi_j(V_{\alpha_{2j}}\cdot ( f\otimes \delta_{m'})(r,[m]))(q,t)\;=\;V_{\alpha,j}(\Psi_j(f\otimes \delta_{m'})(r,[m]))(q,t).
\end{equation*}

To identify the inner products is slightly trickier, so we consider the following identification of $C^{\ast}(D^j,\eta^{(j)})$ with 
$A_{\alpha_{2j}}=C(\mathbb T)\rtimes_{\alpha_{2j}}\mathbb Z:$
Given $\Lambda\in C_C(D^j,\eta^{(j)})$ which can be viewed as 
\begin{equation*}
\Lambda(((\iota(\frac{k_1}{p^j}), \frac{\theta \cdot k_1}{p^j}), (\iota(\frac{k_2}{p^j}), \frac{k_2}{p^j}))),\; k_1,\;k_2\in\mathbb{Z}
\end{equation*}
we define $\Phi:C_C(D^j,\eta^{(j)})\to C_C(\mathbb T\times \mathbb Z)$ by
\begin{equation*}
\begin{split}
\Phi(\Lambda)(r, n)&=\sum_{k_2\in\mathbb Z}\Lambda(((\iota(\frac{n}{p^j}), \frac{\theta \cdot n}{p^j}), (\iota(\frac{k_2}{p^j}), \frac{k_2}{p^j})))e^{2\pi i k_2r}\\
&=\sum_{k_2\in\mathbb Z}\Lambda(((\iota(\frac{n}{p^j}), \frac{\theta \cdot n}{p^j}), (\iota(\frac{k_2}{p^j}), \frac{k_2}{p^j})))e^{2\pi i k_2r}.
\end{split}
\end{equation*}

We now check that the module morphism given by each $\Phi_j$ preserves the inner products:

For $f_1,\;f_2\in C_C(\mathbb R)$ and $m_1,\;m_2\in \in {\bf F}_{p^{2j}}$ we obtain:
\begin{equation*}
\begin{split}
&\quad \langle \Psi_j(f_1\otimes \delta_{m_1}),\\
&\quad \quad \Psi_j(f_2\otimes \delta_{m_2}) \rangle_{C^{\ast}(D_{\theta},\eta)}(((\iota(\frac{k_1}{p^j}), \frac{\theta \cdot k_1}{p^j}), (\iota(\frac{k_2}{p^j}), \frac{k_2}{p^j})))\\
&=\langle p^{-j}\chi_{[\iota(\frac{-m_1}{p^j})+p^j\mathbb Z_p]}(q)f_1(p^jt), \\
&\quad\quad p^{-j}\chi_{[\iota(\frac{-m_2}{p^j})+p^j\mathbb Z_p]}(q)f_2(p^jt) \rangle_{C^{\ast}(D_{\theta},\eta)}(((\iota(\frac{k_1}{p^j}), \frac{\theta \cdot k_1}{p^j}), (\iota(\frac{k_2}{p^j}), \frac{k_2}{p^j})))\\
&=\begin{cases}
{0}\mbox{ if}\ \;k_1\not=-m_2-(-m_1)\;\text{mod}\;p^{2j},\\
{\frac{1}{p^{j}}e^{-2\pi i\{(\iota(\frac{-m_1k_2}{p^{2j}}))\}_p}\int_{\mathbb R}e^{-2\pi i t\cdot \frac{k_2}{p^{j}}}f_1(p^jt)
\overline{f_2(p^jt+k_1\cdot \theta)}dt}\\\mbox{if}\;k_1\;=-m_2-(-m_1)\;\text{mod}\;p^{2j},
\end{cases}\\
&=\begin{cases}
{0}\mbox{ if}\ \;k_1\not=m_1-m_2\;\text{mod}\;p^{2j},\\
{\frac{1}{p^{j}}e^{2\pi i\{(\iota(\frac{m_1k_2}{p^{2j}}))\}_p}\int_{\mathbb R}e^{-2\pi i t\cdot \frac{k_2}{p^{j}}}f_1(p^jt)
\overline{f_2(p^jt+k_1\cdot \theta)}dt,}\\
\mbox{if}\;k_1\;=\;m_1-m_2\;\text{mod}\;p^{2j}
\end{cases}
\intertext{Now since $m_1k_2\in \mathbb Z$ we can write this as:}
&=\begin{cases}
{0}\mbox{ if}\ \;k_1\not=m_1-m_2\;\text{mod}\;p^{2j},\\
{\frac{1}{p^{j}}\int_{\mathbb R}e^{-2\pi i\frac{k_2(p^{2j}t-m_1)}{p^{2j}}}f_1(p^jt)
\overline{f_2(p^jt+k_1\cdot \theta)}dt}\\\mbox{if }k_1\;=\;m_1-m_2\;\text{mod}\;p^{2j}
\end{cases}\\
&=\begin{cases}
{0}\mbox{ if}\ \;k_1\not=m_1-m_2\;\text{mod}\;p^{2j},\\
{\frac{p^j}{p^{2j}}\int_{\mathbb R}e^{-2\pi i\frac{k_2(p^{2j}t-m_1)}{p^{2j}}}f_1(p^jt)
\overline{f_2(p^jt+k_1\cdot \theta)}dt,}\\
\mbox{if }\;k_1\;=\;m_1-m_2\;\text{mod}\;p^{2j}
\end{cases}\\
&=\begin{cases}
{0}\mbox{if }\ \;k_1\not=m_1-m_2\;\text{mod}\;p^{2j},\\
{\frac{1}{p^{2j}}\int_{\mathbb R}e^{-2\pi i\frac{k_2(u-m_1)}{p^{2j}}}f_1(u)
\overline{f_2(u+k_1\cdot \theta)}du}\\
\mbox{if}\;k_1\;=\;m_1-m_2\;\text{mod}\;p^{2j}
\end{cases}\\
&=\begin{cases}
{0}\mbox{ if}\ \;k_1\not=m_1-m_2\;\text{mod}\;p^{2j},\\
{\frac{1}{p^{2j}}\int_{\mathbb R}e^{-2\pi i\frac{k_2(u-m_1)}{p^{2j}}}f_1(u)
\overline{f_2(u+k_1\cdot \theta)}du,}\\
\mbox{if}\;k_1\;=\;m_1-m_2\;\text{mod}\;p^{2j}
\end{cases}
\end{split}
\end{equation*}

>From this we obtain, for $n=\; m_1-m_2$ modulo $p^{2j}:$ 
\begin{equation*}
\begin{split}
&\quad \Phi(\langle \Psi_j(f_1\otimes \delta_{m_1}), \Psi_j(f_2\otimes \delta_{m_2}) \rangle_{C^{\ast}(D_{\theta},\eta)})(\widetilde{(r, [m'])}, n)\\
&=\sum_{k_2\in\mathbb Z}\langle \Psi_j(f_1\otimes \delta_{m_1}), \Psi_j(f_2\otimes \delta_{m_2}) \rangle_{C^{\ast}(D_{\theta},\eta)}  (((\iota(\frac{n}{p^j}), \frac{\theta \cdot n}{p^j}), (\iota(\frac{k_2}{p^j}), \frac{k_2}{p^j})))e^{2\pi ik_2r}\\
&=\begin{cases}
{0} \mbox{ if}\ \;n\not=m_1-m_2\;\text{mod}\;p^{2j},\\
{\sum_{k_2\in\mathbb Z}[\frac{1}{p^{2j}}\int_{\mathbb R}f_1(u)
\overline{f_2(u+n\cdot \theta)}e^{-2\pi i \frac{k_2(u-m_1)}{p^{2j}}}du]e^{2\pi ik_2r},}\\ \mbox{if }\;n\;=\;m_1-m_2\;\text{mod}\;p^{2j}.
\end{cases}
\end{split}
\end{equation*}

But this, together with the results of Theorem \ref{ThmRieffelbim}, proves that for $\phi_1$ and $\phi_2$ with compact support and sufficiently regular, we have 
\begin{multline*}
\Phi(\langle \Psi_j(\phi_1\otimes \delta_{m_1}), \Psi_j(\phi_2\otimes \delta_{m_2}) \rangle_{C^{\ast}(D_{\theta},\eta)})(\widetilde{(r, [m'])}, n)\\
=\;\langle \phi_1\otimes \delta_{m_1},\phi_2\otimes \delta_{m_2} \rangle_{A_{\alpha_{2j}}}(\widetilde{(r, [m])}, n),
\end{multline*}
so that the map $\Psi_j$ provides a isomorphism of projective $A_{\alpha_{2j}}$-modules, as desired.

We now are prepared to prove the main theorem of this paper:

\begin{theorem}
\label{thmPMRSNCS}
Fix an irrational $\theta\in (0,1).$ Let $\Xi=\overline{C_C(\mathbb Q_p\times \mathbb R)}$  be the equivalence bimodule between the noncommutative solenoids $C^{\ast}(\Gamma, \alpha)$ and $C^{\ast}(\Gamma, \beta)$ for $\alpha\; =\;(\alpha_0=\theta, \alpha_1=\frac{\theta+1}{p}, \alpha_2=\frac{\theta+1}{p^2},\cdots, \alpha_j=\frac{\theta+1}{p^j},\cdots, )$
and 
$\beta=(\beta_0=1-\frac{\theta+1}{\theta},\beta_1=1-\frac{\theta+1}{p\theta}, \cdots,\beta_j= 1-\frac{\theta+1}{p^j\theta}, \cdots )$ constructed in \cite{LP2}.
Let $\{V_j\}$ be the finitely generated projective $A_{\alpha_{2j}}$-submodules of $\Xi$  constructed in Lemma \ref{lemmodmra}.  Then the collection $\{V_j\}$ forms a projective multiresolution structure for the pair $(C^{\ast}(\Gamma, \alpha), \Xi).$ Moreover, this projective multiresolution structure can be identified with the projective multiresolution structure defined in Example \ref{ex2proj}, where 
$A_j=A_{\alpha_{2j}},\;P$ is the projection in $A_{\alpha_0}$ of trace $\theta,\;V_j=A_{\alpha_{2j}}\cdot P$ for all $j\geq 0,$ and 
$${\mathcal X}=\lim_{j\to\infty}A_{\alpha_{2j}}\cdot P=\;C^{\ast}(\Gamma, \alpha)\cdot P.$$

\end{theorem}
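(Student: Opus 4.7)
The plan is to verify the three defining properties of Definition \ref{defPMRS} and then exhibit the promised identification with the bimodule of Example \ref{ex2proj}. Conditions (1) and the first half of the work for the identification will follow almost immediately from results already developed: Lemma \ref{lemmodmra} establishes $A_{\alpha_{2j}}$-invariance of $V_j$ and the inclusion $\langle V_j,V_j\rangle\subseteq A_{\alpha_{2j}}$, while the map $\Psi_j$ constructed after that lemma provides an $A_{\alpha_{2j}}$-equivariant isomorphism between $V_j$ and the Heisenberg--Rieffel bimodule $\overline{C_C(G)}$ of Proposition \ref{PropRieffelbim} that preserves inner products, a fact for which the key calculation is exactly Theorem \ref{ThmRieffelbim} applied to the displayed formulas. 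Since $\overline{C_C(G)}$ is already a full finitely generated projective $A_{\alpha_{2j}}$-module, so is $V_j$, giving condition (1).

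For the nesting $V_j\subseteq V_{j+1}$, I would use the $p$-adic coset decomposition $p^j\mathbb{Z}_p=\bigsqcup_{n=0}^{p-1}(p^{j+1}\mathbb{Z}_p+np^j)$ combined with the identity $\iota(m/p^j)+np^j=\iota((pm+np^{2j+1})/p^{j+1})$ to obtain the refinement
\begin{equation*}
\chi_{[\iota(m/p^j)+p^j\mathbb{Z}_p]} = \sum_{n=0}^{p-1}\chi_{[\iota((pm+np^{2j+1})/p^{j+1})+p^{j+1}\mathbb{Z}_p]}\text{.}
\end{equation*}
Tensoring with $f\in C_C(\mathbb{R})$ and absorbing the normalization factor $\sqrt{p}^{\,j}=p^{-1/2}\sqrt{p}^{\,j+1}$ places every generator of $V_j$ inside $V_{j+1}$. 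For density of $\bigcup_j V_j$ in $\Xi$, I would exploit the total disconnectedness of $\mathbb{Q}_p$ to approximate any element of $C_C(\mathbb{Q}_p\times\mathbb{R})$ uniformly by finite linear combinations of products $\chi_{E}\otimes f$ with $E=\iota(k/p^N)+p^J\mathbb{Z}_p$; the argument in Example \ref{exHaarMRS} places each such $\chi_{E}$ in some $\widetilde{V}_K$ of the Haar multiresolution structure, hence places each summand in the corresponding $V_K$. Uniform convergence on a common compact support of the $\mathbb{Q}_p$-factor dominates the Hilbert-module norm on such functions, producing convergence in $\Xi$.

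For the identification with Example \ref{ex2proj}, observe that when $j=0$ we have $\mathbf{F}_{p^0}$ trivial, $G=\mathbb{R}$, and $V_0$ coincides under $\Psi_0$ with the classical Heisenberg module of $A_\theta=A_{\alpha_0}$, which is singly generated and whose rank projection has trace $\theta$. Fixing a normalized generator $\xi_0\in V_0$, set $P:=\langle\xi_0,\xi_0\rangle_{A_{\alpha_0}}$, a projection of trace $\theta$. The compatibility condition built into Definition \ref{directed-system-modules-def} then forces $\langle\xi_0,\xi_0\rangle_{A_{\alpha_{2j}}}=\varphi_{0,j}(P)$ for every $j$, and the map $a\mapsto a\cdot\xi_0$ descends to an $A_{\alpha_{2j}}$-module isomorphism $A_{\alpha_{2j}}\varphi_{0,j}(P)\cong V_j$, compatible by construction with both systems of inclusions. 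The main obstacle I anticipate is verifying that a \emph{single} $\xi_0\in V_0$ continues to generate $V_j$ as an $A_{\alpha_{2j}}$-module for every $j\geq 0$; this should follow from a Rieffel-style rank computation in $K_0(A_{\alpha_{2j}})$ combined with Rieffel's classification of projective modules over irrational rotation algebras, but it requires careful bookkeeping to cross-reference the $\sqrt{p}^{\,j}$ normalizations used to define $V_j$ with the $p^{2j}$-fold covering inherent in the embeddings $\varphi_j:U_{\alpha_{2j}}\mapsto U_{\alpha_{2(j+1)}}^{p},\,V_{\alpha_{2j}}\mapsto V_{\alpha_{2(j+1)}}^{p}$.
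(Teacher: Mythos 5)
Your overall architecture matches the paper's: condition (1) of Definition \ref{defPMRS} from Lemma \ref{lemmodmra} together with the $\Psi_j$-identification with Proposition \ref{PropRieffelbim}; nesting and density from the Haar refinement equation and the argument of Example \ref{exHaarMRS}; and the identification with Example \ref{ex2proj} via a single generator $\xi_0\in V_0$ with $P=\langle\xi_0,\xi_0\rangle_{A_{\alpha_0}}$. (In fact you spell out the nesting and density more explicitly than the paper does, which simply invokes the inclusions $V_0\subseteq V_j\subseteq\Xi$.) The one place you genuinely diverge is the step you flag as your main obstacle — that the single $\xi_0$ still generates $V_j$ over $A_{\alpha_{2j}}$ for all $j$ — where you propose a $K_0$-rank computation plus Rieffel's cancellation theorem. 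That route can be made to work, but it only yields an abstract isomorphism $V_j\cong A_{\alpha_{2j}}P$; it does not by itself show that your specific map $a\mapsto a\cdot\xi_0$ is surjective. The paper closes this gap more directly, and you already have the needed ingredient in hand: the right-hand ($B_j=A_{\beta_{2j}}$-valued) inner products are compatible with the unital inclusions $B_0\hookrightarrow B_j$ (all these twisted group algebras $C^{\ast}(D^{\perp,(j)},\overline{\eta})$ share the same unit $\delta_0$), so $\langle\xi_0,\xi_0\rangle_{B_0}=1_{B_0}$ persists as $\langle\xi_0,\xi_0\rangle_{B_j}=1_{B_j}$ for every $j$. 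Then Rieffel's Proposition 2.2 of \cite{Rie1} applies at each level: for any $v\in V_j$ one has $v=v\cdot\langle\xi_0,\xi_0\rangle_{B_j}=\langle v,\xi_0\rangle_{A_{\alpha_{2j}}}\cdot\xi_0$, so $\xi_0$ generates, $\langle\xi_0,\xi_0\rangle_{A_{\alpha_{2j}}}=\varphi_{0,j}(P)$ is a projection, and $V_j\cong A_{\alpha_{2j}}\varphi_{0,j}(P)$ with the isomorphisms automatically compatible with the inclusions. This avoids cancellation and the bookkeeping of normalizations entirely; I recommend substituting it for your $K_0$ argument.
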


\begin{proof}
We refer to Definition \ref{defPMRS} and note that Lemma \ref{lemmodmra} has established that $V_j=\overline{\rho^{(j)}(C_C({\bf F_{p^{2j}}}\times \mathbb R))}$ is a $A_{\alpha_{2j}}=A_{(\theta+1)/p^{2j}}=C^{\ast}(D^j,\eta^{(j)})$-module and that $\langle V_j, V_j\rangle_{C^{\ast}(\Gamma, \alpha)}\subseteq A_{\alpha_{2j}}.$  The discussion immediately preceding the statement of this Theorem established that $V_j$ is a projective $A_{\alpha_{2j}}$-module that can in fact be identified with the projective $A_{\alpha_{2j}}$-module described in Proposition \ref{PropRieffelbim}.  Therefore,  $\{V_j\}$ forms a projective multiresolution structure for the pair $(C^{\ast}(\Gamma, \alpha), \Xi).$ 

We now note that the parts of the proof of Lemma \ref{lemmodmra} having to deal with inner products can be easily adapted to show that the right valued inner products 
$\langle V_j, V_j\rangle_{C^{\ast}(D^{\perp},\overline{\eta})}$ take on values in precisely the right subalgebra $B_j=C^{\ast}(D^{\perp,(j)},\overline{\eta}),$
where $D^{\perp}$ was calculated in \cite{LP2} to be 
$$D^{\perp}\;=\;\{((\iota(r_1),-\frac{r_1}{\theta}),(\iota(r_2),-r_2)): r_1, r_2\in \mathbb Z[\frac{1}{p}]\},$$
and 
$$D^{\perp, (j)}=\{((\iota(\frac{k_1}{p^j}),-\frac{k_1}{p^j\theta}), (\iota(\frac{k_2}{p^j}),-\frac{k_2}{p^j})): k_1, k_2\in \mathbb Z\}.$$
But $C^{\ast}(D^{\perp, (j)}, \overline{\eta})$ was calculated in \cite{LP2} to be exactly $A_{\beta_{2j}},$ with 
$$C^{\ast}(D^{\perp,(j)},\overline{\eta})\cong \lim_{j\to\infty}C^{\ast}(D^{\perp, (j)}, \overline{\eta})\;=\;\lim_{j\to\infty} A_{\beta_{2j}}.$$

By Proposition 2.2. of \cite{Rie1}, for some $N\in\mathbb N$ there exists $\xi_1,\;\cdots, \xi_N\;\in\; V_0$ such that 
$$\sum_{i=1}^N\langle \xi_i, \xi_i\rangle_{B_0}\;=\sum_{i=1}^N\langle \xi_i, \xi_i\rangle_{A_{\beta_0}}\;=\;1_{A_{\beta_0}}.$$  By Theorem 1.1 of \cite{Rie2},  $V_0$ is a (left) projective $A_{\alpha_0}$-module of trace $|-\theta|=\theta,$ and indeed the proof of our own Proposition \ref{PropRieffelbim} shows that we can take $N=1$ and find $\xi_1\in V_0$ with $\langle \xi_1, \xi_1\rangle_{A_{\beta_0}}\;=\;1_{A_{\beta_0}}.$  By Proposition 2.2. of \cite{Rie1}, we see that 
$$\langle \xi_1, \xi_1\rangle_{A_{\alpha_0}}\;=\;P$$ where $P$ is a projection in $A_{\alpha_0}$ with trace $\theta.$  Moreover the same proposition shows us that as an $A_{\alpha_0}-A_{\beta_0}$-bimodule, 
$V_0$ is isomomorphic to $A_{\alpha_0}\cdot P$ and $A_{\beta_0}$ is isomorphic to $P\cdot A_{\alpha_0}\cdot P.$

However, since $\xi_1\in V_0\subseteq V_j\;\subseteq \Xi,$ we see that for every $j\geq 0$ the same argument works, and the equivalence bimodule 
$$A_j=C^{\ast}(D^{(j)},\eta)\;-\;V_j\;-\;B_j=C^{\ast}(D^{\perp,(j)},\overline{\eta})$$
is isomorphic to the bimodule 
$$A_j=A_{\alpha_j}\;-\;A_{\alpha_j}\cdot P\;-\;P\cdot A_{\alpha_j}\cdot P\;\cong\; B_j\;\cong\;A_{\beta_j}$$
and finally, the equivalence bimodule 
$$C^{\ast}(D,\eta)\;-\;\Xi\;-\;C^{\ast}(D^{\perp},\overline{\eta})$$
is isomorphic to the bimodule 
$$C^{\ast}(\Gamma, \alpha)\;-\;C^{\ast}(\Gamma, \alpha)\cdot P\;-\; P\cdot C^{\ast}(\Gamma, \alpha)\cdot P,$$
as we desired to show.

\end{proof}

\begin{remark}  The main point here is not that $$C^{\ast}(D,\eta)\;-\;\Xi\;-\;C^{\ast}(D^{\perp},\overline{\eta})$$
is isomorphic to the bimodule 
$$C^{\ast}(\Gamma, \alpha)\;-\;C^{\ast}(\Gamma, \alpha)\cdot P\;-\; P\cdot C^{\ast}(\Gamma, \alpha)\cdot P,$$
since that can be observed fairly quickly from Proposition 2.2 of \cite{Rie1} and our identification of $C^{\ast}(D,\eta)$ with $C^{\ast}(\Gamma, \alpha)$.  The more interesting point is that this equivalence can be written as a direct limit of strong Morita equivalence bimodules at each stage.  Projective multiresolution structures, therefore, do appear to be the correct objects for studying equivalence bimodules between direct limit algebras.

\end{remark}


\bibliographystyle{amsalpha}

\vfill
\end{document}